\newtheorem{thm}{Theorem}[section]
\newtheorem{lem}[thm]{Lemma}
\newtheorem{prop}[thm]{Proposition}
\newtheorem{ex}[thm]{Example}
\newtheorem*{prob*}{Open problem}
\theoremstyle{definition}
\newtheorem{defi}[thm]{Definition}
\theoremstyle{remark}
\newtheorem{rem}[thm]{Remark}
\newtheorem*{rem*}{Remark}
\newcommand{\kringel}{\mathbin{\raise1pt\hbox{$\scriptstyle\circ$}}}
\newcommand{\pkt}{\mathbin{\raise0pt\hbox{$\scriptstyle\bullet$}}}
\newcommand{\C}{\mathbb{C}}
\newcommand{\ad}{{\rm ad}}
\newcommand{\End}{{\rm End}}
\newcommand{\Der}{{\rm Der}}
\newcommand{\La}{\mathfrak{a}}
\newcommand{\Lb}{\mathfrak{b}}
\newcommand{\Lg}{\mathfrak{g}}
\newcommand{\Lh}{\mathfrak{h}}
\newcommand{\Ll}{\mathfrak{l}}
\newcommand{\Ln}{\mathfrak{n}}
\newcommand{\Ls}{\mathfrak{s}}
\newcommand{\Lt}{\mathfrak{t}}
\newcommand{\al}{\alpha}
\newcommand{\be}{\beta}
\newcommand{\ga}{\gamma}
\newcommand{\de}{\delta}
\newcommand{\ep}{\varepsilon}
\newcommand{\la}{\lambda}
\newcommand{\ra}{\rightarrow}
\renewcommand{\phi}{\varphi}
\begin{document}

% Ab hier duerfen Sie wieder.

\title[CPA-structures]{Commutative Post-Lie algebra structures on nilpotent Lie algebras and Poisson algebras}
%  Die Kurzfassung kommt oben ueber die Seiten, sie steht in eckigen Klammern
%  Auch Autorennamen koennen eine Kurzfassung haben

\author[D. Burde]{Dietrich Burde}
\author[C. Ender]{Christof Ender}
\address{Fakult\"at f\"ur Mathematik\\
Universit\"at Wien\\
  Oskar-Morgenstern-Platz 1\\
  1090 Wien \\
  Austria}
\email{dietrich.burde@univie.ac.at}
\address{Fakult\"at f\"ur Mathematik\\
Universit\"at Wien\\
  Oskar-Morgenstern-Platz 1\\
  1090 Wien \\
  Austria}
\email{christof.ender@univie.ac.at}

\date{\today}

\subjclass[2000]{Primary 17B30, 17D25}
\keywords{Post-Lie algebra, Pre-Lie algebra, LR-algebra, Poisson algebra, PA-structure, CPA-structure}

\begin{abstract}
We give an explicit description of commutative post-Lie algebra structures on some classes of nilpotent Lie algebras. 
For non-metabelian filiform nilpotent Lie algebras and Lie algebras of strictly upper-triangular matrices we show that 
all CPA-structures are associative and induce an associated Poisson-admissible algebra.
 \end{abstract}

\maketitle

\section{Introduction}

Post-Lie algebras and post-Lie algebra structures arise in many different contexts. We have introduced these structures in
\cite{BU41} to characterize simply transitive nil-affine actions of a nilpotent Lie group $G$ on another
nilpotent Lie group $N$. This plays an important role in the theory of nil-affine crystallographic groups and 
complete nil-affinely flat manifolds. Post-Lie algebras arise there as a natural common generalization of 
pre-Lie algebras \cite{HEL,KIM,SEG,BU5,BU19,BU24} and LR-algebras \cite{BU34, BU38}, and the geometric questions can be 
formulated on the level of post-Lie algebras. \\
On the other hand, post Lie algebras have been introduced independently by Vallette \cite{VAL} in connection with the 
homology of partition posets and the study of Koszul operads. Since then several authors have studied these algebras in 
various other contexts, e.g., for algebraic operad triples \cite{LOD},
in connection with modified Yang-Baxter equations, Rota-Baxter operators, universal enveloping algebras, 
double Lie algebras, classical $R$-matrices, isospectral flows, Lie-Butcher series and many other topics 
\cite{BAI, BU57, BU59, ELM}. \\[0.2cm]
An important question arising from geometry here is the {\em existence question of post-Lie algebra structures} for 
a given pair of Lie algebras. This is in general a very hard question and the answer depends very much on the algebraic 
properties of the two given Lie algebras. For a survey on the results and open problems see \cite{BU33,BU41,BU44}. 
An important class of post-Lie algebra structures is given by {\em commutative} structures, so-called {\em CPA-structures}. 
These structures are much more accessible than general post-Lie algebra structures and we can answer
several questions concerning existence and classification, thereby finding directions to understand the general 
case of post-Lie algebra structures. For CPA-structures on semisimple, perfect and complete Lie algebras, see \cite{BU51,BU52}.
For CPA-structures on nilpotent Lie algebras, see  \cite{BU57}. \\
In this paper we also show that CPA-structures are related to other algebraic structures coming from geometry, namely to 
Poisson structures \cite{BEB} and Poisson algebras \cite{GOR}. Indeed, for many classes of Lie algebras, CPA-structures 
are associative and induce a Poisson admissible algebra structure. \\[0.2cm]
The paper is structured as follows. In section $2$ we show that for a CPA structure $(V,\cdot)$ on $\Lg$ we have 
$\Lg\cdot [\Lg,\Lg]=0$ if and only if $(V,\cdot)$ is an associative algebra, if and only if it is a Poisson-admissible algebra.
We prove that $\Lg\cdot \Lg\subseteq Z(\Lg)$ implies $\Lg\cdot [\Lg,\Lg]=0$ and that $(V,\cdot, [,\, ])$ is a Poisson
algebra if and only if the CPA-structure is central, i.e., satisfies $\Lg\cdot \Lg\subseteq Z(\Lg)$. \\[0.2cm]
In section $3$ we show that every CPA-structure $(V,\cdot)$ on a non-metabelian complex filiform Lie algebra
is associative, i.e., satisfies  $\Lg\cdot [\Lg,\Lg]=0$ so that $(V,\cdot)$ is Poisson-admissible. The result does
not hold for metabelian filiform Lie algebras, where we only obtain $[\Lg,\Lg]\cdot [\Lg,\Lg]=0$. For special families of
filiform Lie algebras we can give an explicit description of all CPA-structures. \\[0.2cm]
In section $4$ we show that every CPA-structure $(V,\cdot)$ on the nilpotent Lie algebra $\Ln_n(K)$, $n\ge 5$ of 
$n\times n$ strictly upper-triangular matrices over $K$ is associative and $(V,\cdot)$ is Poisson-admissible. \\
This paper is based on results of the PhD thesis \cite{END} of the second author, where further details can be found.

\section{Preliminaries}

Let $K$ denote a field of characteristic zero. We have defined a post-Lie algebra structure 
on a pair of Lie algebras $(\Lg,\Ln)$ over $K$ in \cite{BU41} as follows:

\begin{defi}\label{pls}
Let $\Lg=(V, [\, ,])$ and $\Ln=(V, \{\, ,\})$ be two Lie brackets on a vector space $V$ over 
$K$. A {\it post-Lie algebra structure}, or {\em PA-structure} on the pair $(\Lg,\Ln)$ is a 
$K$-bilinear product $x\cdot y$ satisfying the identities
\begin{align}
x\cdot y -y\cdot x & = [x,y]-\{x,y\} \label{post1}\\
[x,y]\cdot z & = x\cdot (y\cdot z) -y\cdot (x\cdot z) \label{post2}\\
x\cdot \{y,z\} & = \{x\cdot y,z\}+\{y,x\cdot z\} \label{post3}
\end{align}
for all $x,y,z \in V$.
\end{defi}

Define by  $L(x)(y)=x\cdot y$ and $R(x)(y)=y\cdot x$ the left respectively right multiplication 
operators of the algebra $A=(V,\cdot)$. By \eqref{post3}, all $L(x)$ are derivations of the Lie 
algebra $(V,\{,\})$. Moreover, by \eqref{post2}, the left multiplication
\[
L\colon \Lg\ra \Der(\Ln)\subseteq \End (V),\; x\mapsto L(x)
\]
is a linear representation of $\Lg$. The right multiplication $R\colon V\ra V,\; x\mapsto R(x)$
is a linear map, but in general not a Lie algebra representation. \\
If $\Ln$ is abelian, then a post-Lie algebra structure on $(\Lg,\Ln)$ corresponds to
a {\it pre-Lie algebra structure} on $\Lg$. In other words, if $\{x,y\}=0$ for all $x,y\in V$, then 
the conditions reduce to
\begin{align*}
x\cdot y-y\cdot x & = [x,y], \\
[x,y]\cdot z & = x\cdot (y\cdot z)-y\cdot (x\cdot z),
\end{align*}
i.e., $x\cdot y$ is a {\it pre-Lie algebra structure} on the Lie algebra $\Lg$. 
If $\Lg$ is abelian, then the conditions reduce to
\begin{align*}
x\cdot y-y\cdot x & = -\{x,y\} \\
x\cdot (y\cdot z)& = y\cdot (x\cdot z), \\
x\cdot \{y,z\} & = \{x\cdot y,z\}+\{y,x\cdot z\},
\end{align*}
i.e., $-x\cdot y$ is an {\it LR-structure} on the Lie algebra $\Ln$. For details see \cite{BU41}. \\[0.2cm]
An important case of a post-Lie algebra structure arises if the algebra $A=(V,\cdot)$ is {\it commutative}, i.e., 
if $x\cdot y=y\cdot x$ is satisfied for all $x,y\in V$, so that we have $L(x)=R(x)$ for all $x\in V$. Then the two 
Lie brackets $[x,y]=\{x,y\}$ coincide, and we obtain a commutative algebra structure on $V$ associated with only one Lie 
algebra \cite{BU51}:

\begin{defi}\label{cpa}
A {\it commutative post-Lie algebra structure}, or {\em CPA-structure} on a Lie algebra $\Lg$ 
is a $K$-bilinear product $x\cdot y$ satisfying the identities:
\begin{align}
x\cdot y & =y\cdot x \label{com4}\\
[x,y]\cdot z & = x\cdot (y\cdot z) -y\cdot (x\cdot z)\label{com5} \\
x\cdot [y,z] & = [x\cdot y,z]+[y,x\cdot z] \label{com6}
\end{align}
for all $x,y,z \in V$. 
\end{defi}

We often write $(V,\cdot)$ for a CPA-structure on $\Lg$. \\[0.2cm]
It turns out that certain CPA-structures are related to Poisson algebras and Poisson-admissible algebras. Such algebras
also have been studied in a geometric and algebraic context, see for example \cite{BEB, GOR}. The definition is as follows.

\begin{defi}
A {\em Poisson algebra} is a triple $(V,\circ, [\, ,\,] )$, where $(V,\circ)$ is a commutative, associative
algebra, and $\Lg=(V,[\, ,\,])$ is a Lie algebra such that the identity
\begin{align}
[x, y\circ z] & = [z,x]\circ y + x\circ [z,y] \label{poi}
\end{align}
holds for all $x,y,z\in V$. 
\end{defi}

Recall that a non-associative algebra is a vector space $V$ together with a bilinear product $V\times V\ra V$, 
$(x,y)\mapsto x\cdot y$. The product need not be associative in general.

\begin{defi}
A non-associative algebra $(V,\cdot)$ is called {\em Poisson-admissible} if 
$(V,\circ, [\, ,\,] )$, given by
\begin{align*}
[x,y] & = x\cdot y-y\cdot x,\\
x\circ y & = \frac{1}{2}( x\cdot y+y\cdot x)
\end{align*}
is a Poisson algebra.
\end{defi}

A CPA-structure $(V,\cdot)$ on $\Lg$ often satisfies $\Lg\cdot [\Lg,\Lg]=0$. This means that $(V,\cdot)$ is a Poisson-admissible
algebra.

\begin{lem}\label{2.5}
Let $(V,\cdot)$ be a CPA-structure on a Lie algebra $\Lg$. Then the following properties are equivalent.
\begin{itemize}
\item[$(1)$] $\Lg\cdot [\Lg,\Lg]=0$. 
\item[$(2)$] $(V,\cdot)$ is an associative algebra. 
\item[$(3)$] $(V,\cdot)$ is a Poisson-admissible algebra.
\end{itemize}
\end{lem}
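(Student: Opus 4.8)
The plan is to translate everything into the language of the left-multiplication operators $L(x)$, where $L(x)y=x\cdot y$, using that the product is commutative so that $L(x)=R(x)$. First I would record the operator form of the axioms. Identity \eqref{com5} says exactly that $L([x,y])=[L(x),L(y)]=L(x)L(y)-L(y)L(x)$, i.e. $L\colon\Lg\to\End(V)$ is a Lie algebra homomorphism, while commutativity \eqref{com4} gives $L(x)y=L(y)x$. I would also reformulate condition $(1)$: since $x\cdot[y,z]=[y,z]\cdot x=L([y,z])x$ by commutativity, the condition $\Lg\cdot[\Lg,\Lg]=0$ is equivalent to $L(w)=0$ for all $w\in[\Lg,\Lg]$, hence, as $[\Lg,\Lg]$ is spanned by brackets, to $L([x,y])=0$ for all $x,y$.

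The equivalence $(1)\Leftrightarrow(2)$ then rests on the elementary fact that a commutative algebra is associative if and only if all its multiplication operators commute. Concretely, writing the product through $L$ and using commutativity I would rewrite the associator as
\[
(x\cdot y)\cdot z-x\cdot(y\cdot z)=L(z)L(x)y-L(x)L(z)y=[L(z),L(x)]\,y,
\]
so that associativity holds for all $x,y,z$ precisely when $[L(x),L(z)]=0$ for all $x,z$. Combining this with \eqref{com5} in the form $[L(x),L(z)]=L([x,z])$, I conclude that $(V,\cdot)$ is associative if and only if $L([x,z])=0$ for all $x,z$, which is exactly the operator reformulation of $(1)$ obtained above. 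This settles $(1)\Leftrightarrow(2)$ in both directions at once. Note that \eqref{com6} is not needed here at all.

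For $(2)\Leftrightarrow(3)$ I would observe that commutativity collapses the construction in the definition of Poisson-admissibility: the induced bracket $x\cdot y-y\cdot x$ vanishes identically, and the symmetrized product $\frac{1}{2}(x\cdot y+y\cdot x)$ equals $x\cdot y$. Hence the candidate Poisson triple is $(V,\cdot,0)$, whose Leibniz-type identity \eqref{poi} is vacuous and whose (zero) bracket is trivially a Lie bracket; it is a Poisson algebra if and only if $(V,\cdot)$ is a commutative associative algebra. Since $(V,\cdot)$ is already commutative, this holds exactly when it is associative, giving $(2)\Leftrightarrow(3)$.

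The only genuinely substantive point is the commutative-algebra identity for the associator, and even this is a short computation once the product is written through $L$; the remaining work is purely a matter of unwinding the definitions. The main thing to get right is the bookkeeping with commutativity when reducing $\Lg\cdot[\Lg,\Lg]$ to the operator condition $L([x,y])=0$, since this is precisely what links the algebraic hypothesis $(1)$ to the representation identity \eqref{com5}.
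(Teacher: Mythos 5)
Your proposal is correct and follows essentially the same route as the paper: the identity $(x\cdot y)\cdot z-x\cdot(y\cdot z)=[L(z),L(x)]y=L([z,x])y=y\cdot[z,x]$ is exactly the paper's associator computation, just written in operator form, and the $(2)\Leftrightarrow(3)$ step (the induced bracket vanishes, the symmetrized product is $x\cdot y$, so \eqref{poi} is vacuous) is identical. No gaps.
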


\begin{proof}
Using \eqref{com4} and  \eqref{com5} we have
\begin{align*}
(x\cdot y)\cdot z-x\cdot (y\cdot z) & = z\cdot (x\cdot y)-x\cdot (y\cdot z) \\
                                    & = [z,x]\cdot y+x\cdot(z\cdot y)-x\cdot (y\cdot z) \\
                                    & = y\cdot [z,x]
\end{align*}
for all $x,y,z\in V$. This shows that $(1)\Leftrightarrow (2)$. Since $(V,\cdot)$ is commutative we have
\begin{align*}
x\circ y & =\frac{1}{2}(x\cdot y+y\cdot x)=x\cdot y, \\
[x,y] & = x\cdot y-y\cdot x =0,
\end{align*}
and \eqref{poi} is trivially satisfied. Hence $(V,\circ, [\, ,\,] )$ is a Poisson algebra if and only if $(V,\cdot)$ is associative. 
This shows that $(2)\Leftrightarrow (3)$.
\end{proof}

The lemma motivates the following definition.

\begin{defi}
A CPA-structure $(V,\cdot)$ on $\Lg$ is called {\em associative} if $\Lg\cdot [\Lg,\Lg]=0$. It is called
{\em central} if $\Lg\cdot \Lg\subseteq Z(\Lg)$. 
\end{defi}

We have the following implications.

\begin{lem}\label{2.7}
Every central CPA-structure on $\Lg$ is associative and every associative CPA-structure on $\Lg$ satisfies 
$\Lg\cdot \Lg\subseteq Z([\Lg,\Lg])$.
\end{lem}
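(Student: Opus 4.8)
The plan is to treat the two implications separately, with both resting almost entirely on identity \eqref{com6}, which says that each left multiplication $L(x)\colon y\mapsto x\cdot y$ is a derivation of the bracket. So throughout I would read \eqref{com6} as ``$L(x)$ kills whatever it is applied to, up to moving it inside the brackets.''

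For the first implication I would assume $\Lg\cdot\Lg\subseteq Z(\Lg)$ and show directly that $x\cdot[y,z]=0$ for all $x,y,z\in V$. Applying \eqref{com6} gives
\[
x\cdot[y,z]=[x\cdot y,z]+[y,x\cdot z].
\]
Since $x\cdot y$ and $x\cdot z$ both lie in $Z(\Lg)$ by hypothesis, each bracket on the right vanishes, whence $x\cdot[y,z]=0$. As $[\Lg,\Lg]$ is spanned by such brackets, this yields $\Lg\cdot[\Lg,\Lg]=0$, i.e. the structure is associative (invoking the equivalence in Lemma \ref{2.5} if the reformulation is wanted).

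For the second implication I would assume $\Lg\cdot[\Lg,\Lg]=0$ and show that every product $x\cdot y$ centralizes $[\Lg,\Lg]$, i.e. $[x\cdot y,[a,b]]=0$ for all $a,b\in V$. The key move is to feed a \emph{nested} bracket into \eqref{com6}: replacing $z$ by $[a,b]$ gives
\[
x\cdot[y,[a,b]]=[x\cdot y,[a,b]]+[y,x\cdot[a,b]].
\]
Now both $[a,b]$ and $[y,[a,b]]$ lie in $[\Lg,\Lg]$, so by the associativity hypothesis $x\cdot[a,b]=0$ and $x\cdot[y,[a,b]]=0$. Substituting these, the identity collapses to $[x\cdot y,[a,b]]=0$, which is exactly $\Lg\cdot\Lg\subseteq Z([\Lg,\Lg])$.

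I do not expect any genuine obstacle here; the argument is essentially two applications of \eqref{com6}. The only point requiring a moment's thought is the correct substitution in the second part: applying the derivation $L(x)$ to the nested bracket $[y,[a,b]]$ lets me exploit $\Lg\cdot[\Lg,\Lg]=0$ twice at once — for the nested bracket on the left-hand side and for the inner factor $x\cdot[a,b]$ on the right — after which the desired centralizing property drops out immediately.
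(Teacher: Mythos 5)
Your proposal is correct and follows essentially the same route as the paper: both implications are obtained by two direct applications of \eqref{com6}, the first using $\Lg\cdot\Lg\subseteq Z(\Lg)$ to kill both brackets on the right, the second rearranging \eqref{com6} with $z\in[\Lg,\Lg]$ (your $z=[a,b]$, which suffices by linearity) and using $\Lg\cdot[\Lg,\Lg]=0$ twice. No gaps.
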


\begin{proof}
Assume that $\Lg\cdot \Lg\subseteq Z(\Lg)$. Then by \eqref{com6} we have
\[
x\cdot [y,z]=[x\cdot y,z]+[y,x\cdot z]=0
\]
for all $x,y,z\in V$. Hence we obtain that $\Lg\cdot [\Lg,\Lg]=0$. Conversely suppose that
$\Lg\cdot [\Lg,\Lg]=0$ and let $x,y\in \Lg$ and $z\in [\Lg,\Lg]$. Then $x\cdot z=0$ by assumption, so that by \eqref{com6}
we have
\[
[x\cdot y,z] = x\cdot [y,z]-[y, x\cdot z] = 0.
\]
Hence we have $\Lg\cdot \Lg \subseteq Z([\Lg,\Lg])$.
\end{proof}

We have studied central CPA-structures on nilpotent Lie algebras in \cite{BU57} and shown the following result, see Theorem $4.3$.

\begin{thm}
All CPA-structures on a free-nilpotent Lie algebra $F_{3,c}$ with $3$ generators and nilpotency class $c\ge 3$ are central.
\end{thm}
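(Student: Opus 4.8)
The plan is to exploit the grading of the free nilpotent Lie algebra $\Lg=F_{3,c}=\bigoplus_{m=1}^{c}\Lg_m$ by bracket length (so that $\Lg_1=\s\{e_1,e_2,e_3\}$ generates $\Lg$ and the lower central series is $\Lg^m=\bigoplus_{k\ge m}\Lg_k$), together with the two structural facts available here: each $L(x)$ is a derivation by \eqref{com6}, and $L\colon\Lg\to\Der(\Lg)$ is a representation by \eqref{com5}. The first move is to identify the centre. In a free nilpotent Lie algebra one has $Z(\Lg)=\Lg_c=\Lg^c$, because $\ad\colon\Lg_m\to\Hom(\Lg_1,\Lg_{m+1})$ is injective for $m<c$. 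Hence ``central'' means precisely $\Lg\cdot\Lg\subseteq\Lg^c$, and I will prove the seemingly stronger but equivalent statement that $e_i\cdot e_j\in\Lg^c$ for the three generators. Note that every derivation preserves each $\Lg^m$, so every $L(x)$ raises filtration degree by at least $0$; this will be used repeatedly.

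The second step reduces the whole theorem to that statement about generators. Assume $e_i\cdot e_j\in\Lg^c=Z(\Lg)$ for all $i,j$ and write $D_i=L(e_i)$. On a bracket of two generators the derivation identity \eqref{com6} gives
\[
D_i([e_j,e_k])=[D_ie_j,e_k]+[e_j,D_ie_k]\in[Z(\Lg),\Lg]+[\Lg,Z(\Lg)]=0,
\]
and since every element of $\Lg^2$ is a combination of iterated brackets of generators of length $\ge2$, the Leibniz rule yields $D_i(\Lg^2)=0$; thus $D_i(\Lg)=D_i(\Lg_1)\subseteq Z(\Lg)$, i.e. $x\cdot e_i=D_i(x)\in Z(\Lg)$ for every $x$. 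An induction on the bracket length of $y$, using \eqref{com6} in the form $x\cdot[u,v]=[x\cdot u,v]+[u,x\cdot v]$ with $x\cdot u,\,x\cdot v\in Z(\Lg)$, then gives $x\cdot y\in Z(\Lg)$ for all $x,y$. So everything comes down to proving $e_i\cdot e_j\in\Lg^c$.

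For this final and hardest step I would decompose $D_i=\sum_{k\ge0}D_i^{(k)}$ into graded components $D_i^{(k)}\colon\Lg_m\to\Lg_{m+k}$, each again a derivation since the bracket is graded, and show that $D_i^{(k)}|_{\Lg_1}=0$ for all $k\le c-2$, so that $D_i(e_j)=D_i^{(c-1)}(e_j)\in\Lg_c$. The base case, vanishing of the degree-preserving part $A_i:=D_i^{(0)}|_{\Lg_1}\in\End(\Lg_1)$, is the crux. Commutativity \eqref{com4} gives the symmetry $A_ie_j=A_je_i$, and comparing the lowest graded components of \eqref{com5} for $x=e_i,\,y=e_j,\,z=e_k$ (whose left-hand side $D_k([e_i,e_j])$ has no $\Lg_1$-part) forces $[A_i,A_j]=0$; equivalently the $A_i$ are the commuting left multiplications of a commutative product on $\Lg_1$. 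These conditions alone do \emph{not} force $A_i=0$, so the argument must descend one level deeper: comparing the degree-$2$ and degree-$3$ components of \eqref{com5} produces relations inside $\Lg_2$ and $\Lg_3$, and here the freeness of $\Lg$ enters decisively, since in degrees $\le c$ the iterated brackets of the generators satisfy no relations beyond antisymmetry and Jacobi, turning these relations into an overdetermined linear system whose only solution is $A_i=0$. This is exactly where the hypothesis $c\ge3$ is used, the degree-$3$ relations being unavailable in class $2$. I expect this base case to be the main obstacle. Once $A_i=0$, each $L(x)$ raises filtration degree by at least one, and the same comparison of graded components in \eqref{com5}, now propagated upward, kills the components $D_i^{(k)}|_{\Lg_1}$ for $k=1,\dots,c-2$ in turn, leaving $e_i\cdot e_j\in\Lg_c$ and completing the proof.
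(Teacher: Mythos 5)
First, a point of comparison: the paper does not prove this theorem at all --- it is quoted from \cite{BU57}, Theorem 4.3 --- so your argument has to stand entirely on its own. Within your proposal, the preparatory steps are correct: $Z(F_{3,c})=\Lg_c$ is the top graded piece; the reduction is valid (a derivation sending the generators into the centre kills $[\Lg,\Lg]$, and \eqref{com6} then propagates centrality to all products, so it suffices to treat $e_i\cdot e_j$ for generators); and the decomposition $L(e_i)=\sum_{k\ge 0}D_i^{(k)}$ into graded derivation components is legitimate, since derivations preserve the lower central series.

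The gap is that everything after this set-up --- which is the entire content of the theorem --- is asserted rather than proved. The relations you propose to use are not the ``overdetermined linear system'' you describe: writing $A_i=D_i^{(0)}|_{\Lg_1}$ and $B_i=D_i^{(1)}|_{\Lg_1}$, the degree-$2$ component of \eqref{com5} reads
\[
[A_ke_i,e_j]+[e_i,A_ke_j]=B_i(A_je_k)-B_j(A_ie_k)+\tilde A_i(B_je_k)-\tilde A_j(B_ie_k),
\]
which couples $A$ and $B$ quadratically, and the degree-$3$ relations you invoke are never written down, let alone solved. This matters because the constraints you do verify are genuinely insufficient: they make $x*y:=A_xy$ a commutative product on $\Lg_1$ with commuting (in fact, by nilpotency of the $L(x)$ on a stem algebra, nilpotent) multiplication operators, and such nonzero structures exist in abundance in dimension $3$, so the vanishing of $A$ must come exactly from the computation you have not done. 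The ``propagation upward'' has the same problem: for $k$ close to $c-2$ the commutator $[D_i^{(k)},D_j^{(k)}]$ raises degree beyond $c$ and yields no information, leaving only the relations $[D_m^{(k)}e_i,e_j]=[D_m^{(k)}e_j,e_i]$ in the free Lie algebra together with the symmetry $D_m^{(k)}e_i=D_i^{(k)}e_m$, and deciding that these force $D_m^{(k)}|_{\Lg_1}=0$ is again a nontrivial computation in which the hypotheses ``three generators'' and ``free'' must actually be brought to bear. You identify the base case as ``the main obstacle'' yourself; since neither it nor the subsequent inductive steps are carried out, what you have is a plausible plan, not a proof.
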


This result should also hold for every number of generators $g\ge 3$ and nilpotency class $c\ge 3$. 
 
\begin{lem}
Let $(V,\cdot)$ be a CPA-structure on $\Lg$. Then $(V,\cdot, [\, ,\,] )$ is a Poisson algebra if and only if $(V,\cdot)$ is central. 
\end{lem}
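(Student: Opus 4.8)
The plan is to prove the biconditional by showing that the Poisson identity \eqref{poi}, specialized to the CPA-product $\cdot$, is equivalent to the centrality condition $\Lg\cdot\Lg\subseteq Z(\Lg)$. For a CPA-structure, $(V,\cdot)$ is automatically commutative by \eqref{com4}, so to check that $(V,\cdot,[\,,\,])$ is a Poisson algebra the only two things that can fail are associativity of $\cdot$ and the Poisson compatibility identity \eqref{poi}. So my first step is to reduce the problem to these two conditions and handle them separately.

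For the forward direction I would assume $(V,\cdot,[\,,\,])$ is a Poisson algebra and aim to derive $\Lg\cdot\Lg\subseteq Z(\Lg)$. Since a Poisson algebra requires $(V,\cdot)$ to be associative, Lemma \ref{2.5} immediately gives me the associative condition $\Lg\cdot[\Lg,\Lg]=0$, and hence by Lemma \ref{2.7} that $\Lg\cdot\Lg\subseteq Z([\Lg,\Lg])$; but I need the stronger conclusion that products land in the full center $Z(\Lg)$. To upgrade this I would write out the Poisson identity \eqref{poi} with $\circ$ replaced by $\cdot$ and compare it termwise with the CPA-identity \eqref{com6}. Rearranging \eqref{com6} as $x\cdot[y,z]=[x\cdot y,z]+[y,x\cdot z]$ and the Poisson identity \eqref{poi} as $[x,y\cdot z]=[z,x]\cdot y+x\cdot[z,y]$, I would subtract or combine them (using commutativity of $\cdot$ and associativity, which I already have) to isolate a bracket of the form $[x\cdot y,z]$ and show it must vanish for all $z$, i.e. that $x\cdot y\in Z(\Lg)$.

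For the converse I would assume $\Lg\cdot\Lg\subseteq Z(\Lg)$ and verify the two Poisson conditions directly. Associativity is immediate: centrality implies the associative condition by the first part of Lemma \ref{2.7}, and then Lemma \ref{2.5} gives that $(V,\cdot)$ is associative. For the Poisson compatibility \eqref{poi} I would substitute the centrality hypothesis into both sides: on the right-hand side the terms $[z,x]\cdot y$ and $x\cdot[z,y]$ are products of the form $\Lg\cdot\Lg$ and hence lie in $Z(\Lg)$—but more directly, $[z,x]\cdot y\in Z(\Lg)$ need not vanish on its own, so I would instead use that \eqref{com6} already forces $x\cdot[y,z]=0$ under centrality (as shown in the proof of Lemma \ref{2.7}), and that the left-hand side $[x,y\cdot z]$ also vanishes because $y\cdot z\in Z(\Lg)$. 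The key algebraic identity I expect to need here is the rewriting of \eqref{com6} into a form matching \eqref{poi} after relabeling the variables and using antisymmetry of the bracket.

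The main obstacle will be the bookkeeping in the forward direction: starting only from the Poisson identity, I must extract the full centrality $\Lg\cdot\Lg\subseteq Z(\Lg)$ rather than the weaker $Z([\Lg,\Lg])$ that Lemma \ref{2.7} gives for free. The clean way to do this is to compare the Poisson identity \eqref{poi} and the CPA-identity \eqref{com6} as two simultaneously valid relations among the same symmetric product and Lie bracket; their difference should collapse to the single statement $[x\cdot y,z]=0$ for all $x,y,z$, which is exactly $\Lg\cdot\Lg\subseteq Z(\Lg)$. I would carry out this comparison carefully, watching the signs in $[z,x]=-[x,z]$, since a sign error there is the most likely place for the argument to break down.
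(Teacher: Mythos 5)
Your proposal is correct and follows essentially the same route as the paper: both directions reduce to associativity via Lemmas \ref{2.5} and \ref{2.7}, after which every term on the right-hand side of \eqref{poi} lies in $\Lg\cdot[\Lg,\Lg]=0$, so the identity collapses to $[x,y\cdot z]=0$, which is exactly centrality. The only simplification to note is that your planned comparison of \eqref{poi} with \eqref{com6} in the forward direction is unnecessary: once $\Lg\cdot[\Lg,\Lg]=0$ is known, the Poisson identity alone already forces $[x,y\cdot z]=0$ for all $x$, since its right-hand side vanishes termwise.
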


\begin{proof}
Suppose that $(V,\cdot)$ is central. Then it is also associative by Lemma $\ref{2.7}$. 
Hence  $(V,\cdot)$ is commutative and associative and \eqref{poi} for the product $x\cdot y$ becomes
\[
[x,y\cdot z]=[x,y]\cdot z+y\cdot [x,z],
\]
which is satisfied, because every term is zero. Hence $(V,\cdot, [\, ,\,] )$ is a Poisson algebra. 
Conversely, if $(V,\cdot, [\, ,\,] )$ 
is a Poisson algebra, then $(V,\cdot)$ is associative, hence satisfies $\Lg\cdot [\Lg,\Lg]=0$ by Lemma $\ref{2.5}$. 
Hence we have $[x,y\cdot z]=[x,y]\cdot z+y\cdot [x,z]=0$ for all $x,y,z\in V$ so that $\Lg\cdot \Lg\subseteq Z(\Lg)$.
\end{proof}

Also certain LR-structures are related to Poisson-admissible algebras.

\begin{lem}\label{2.10}
Let $(V,\cdot)$ be an LR-structure on a Lie algebra $(\Ln,\{,\, \})$. Then the following properties are equivalent.
\begin{itemize}
\item[$(1)$] $\Ln\cdot \Ln\subseteq Z(\Ln)$. 
\item[$(2)$] $(V,\cdot)$ is an associative algebra. 
\item[$(3)$] $(V,\cdot)$ is a Poisson-admissible algebra.
\end{itemize}
\end{lem}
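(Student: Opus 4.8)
The plan is to establish $(1)\Leftrightarrow(2)$ directly and then close the cycle $(2)\Rightarrow(3)\Rightarrow(1)$, with everything resting on two associator identities valid for \emph{every} LR-structure. Recall that such a structure satisfies $x\cdot y-y\cdot x=\{x,y\}$ together with the left- and right-commutation laws $x\cdot(y\cdot z)=y\cdot(x\cdot z)$ and $(x\cdot y)\cdot z=(x\cdot z)\cdot y$. First I would apply the right law to $(x\cdot y)\cdot z$ and the left law to $x\cdot(y\cdot z)$ to obtain
\[
(x\cdot y)\cdot z-x\cdot(y\cdot z)=(x\cdot z)\cdot y-y\cdot(x\cdot z)=\{x\cdot z,\,y\},
\]
the exact analogue of the identity $(x\cdot y)\cdot z-x\cdot(y\cdot z)=y\cdot[z,x]$ used in the proof of Lemma \ref{2.5}. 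Since $\cdot$ is associative iff this associator vanishes for all $x,y,z$, and $\{x\cdot z,y\}=0$ for all $x,y,z$ means exactly $\Ln\cdot\Ln\subseteq Z(\Ln)$, this yields $(1)\Leftrightarrow(2)$ at once.

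For the symmetrized product $x\circ y=\frac{1}{2}(x\cdot y+y\cdot x)$ I would next compute its associator. Expanding $4\big((x\circ y)\circ z-x\circ(y\circ z)\big)$ into its eight triple products and normalising each with the two commutation laws, all but four terms cancel, leaving $(x\cdot y)\cdot z-x\cdot(y\cdot z)+y\cdot(z\cdot x)-(z\cdot x)\cdot y$. Feeding in the first identity together with $y\cdot(z\cdot x)-(z\cdot x)\cdot y=\{y,z\cdot x\}$ collapses this to
\[
4\big((x\circ y)\circ z-x\circ(y\circ z)\big)=\{x\cdot z,\,y\}+\{y,\,z\cdot x\}=\{y,\{z,x\}\}.
\]
Hence $(V,\circ)$ is associative if and only if $[\Ln,\Ln]\subseteq Z(\Ln)$, i.e. $\Ln$ is $2$-step nilpotent. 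The implication $(2)\Rightarrow(3)$ is then quick: by $(1)\Leftrightarrow(2)$ centrality holds, so $\{x,y\}\in\Ln\cdot\Ln\subseteq Z(\Ln)$ and $\Ln$ is $2$-step nilpotent, whence $\circ$ is associative; moreover $\{u,v\}\cdot w=(u\cdot v)\cdot w-(v\cdot u)\cdot w=u\cdot(v\cdot w)-v\cdot(u\cdot w)=0$, and a short check then shows that both sides of \eqref{poi} vanish. Thus $(V,\circ,[\,,\,])$ is a Poisson algebra.

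The hard part is $(3)\Rightarrow(1)$, where both halves of Poisson-admissibility must be used together. Associativity of $\circ$ forces, by the second identity, $2$-step nilpotency, so all iterated brackets $\{u,\{v,w\}\}$ vanish; this is exactly what linearises the Poisson identity. Writing $P=\{x,y\cdot z\}$, $Q=\{y,z\cdot x\}$, $R=\{z,x\cdot y\}$, I would expand \eqref{poi} into $\cdot$ and $\{\,,\}$ and discard the (now vanishing) double brackets. Expanding the associator a second way as $(x\cdot y)\cdot z-x\cdot(y\cdot z)=\{x,y\}\cdot z-\{x,y\cdot z\}$ and comparing with the first identity rewrites every term $\{x,y\}\cdot z$ as $P-Q$, so that \eqref{poi} becomes a single linear relation $\alpha P+\beta Q+\gamma R=0$ with constant coefficients. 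The decisive point is that this relation is cyclic-covariant: evaluating it at the triples $(y,z,x)$ and $(z,x,y)$ carries $(P,Q,R)$ to $(Q,R,P)$ and $(R,P,Q)$, producing three equations whose circulant coefficient matrix is nonsingular. Therefore $P=Q=R=0$, and since $x,y,z$ are arbitrary this says $\{x,y\cdot z\}=0$ for all $x,y,z$, i.e. $\Ln\cdot\Ln\subseteq Z(\Ln)$, which is $(1)$.

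I expect the only real difficulties to be bookkeeping: verifying the eightfold cancellation in the $\circ$-associator, and confirming that the circulant system extracted from \eqref{poi} is genuinely nonsingular (so that one obtains $P=Q=R=0$ rather than merely $P+Q+R=0$). The latter is robust to the sign convention in $x\cdot y-y\cdot x=\pm\{x,y\}$, since changing the sign only rescales the relation and its two cyclic mates uniformly, leaving the nonsingularity intact.
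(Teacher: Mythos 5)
Your proposal is correct, and for the equivalence of $(2)$ and $(3)$ it takes a genuinely different route: the paper does not prove that equivalence at all but simply quotes Proposition $3.2$ of \cite{BEB}, whereas you give a self-contained argument. For $(1)\Leftrightarrow(2)$ you and the paper do essentially the same thing; the paper reaches the associator formula $\{x\cdot z,y\}$ via the anti-commutation relation, left-commutativity and the derivation rule, while you invoke right-commutativity directly, which is shorter but equivalent (right-commutativity is part of the LR-axioms and also follows from the three conditions listed in the paper, so this is legitimate). I have checked the two computations you flag as ``bookkeeping'' and they do work out. The symmetrized associator indeed collapses to $4\bigl((x\circ y)\circ z-x\circ(y\circ z)\bigr)=\{y,\{z,x\}\}$, so associativity of $\circ$ is equivalent to $2$-step nilpotency of $\Ln$, recovering Corollary $3.1$ of \cite{BEB} en route. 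For $(3)\Rightarrow(1)$, once double brackets are killed by $2$-step nilpotency, the identity \eqref{poi} reduces (apply the derivation rule to $y\cdot\{z,x\}$ and $x\cdot\{z,y\}$) to $2P+Q-2R=0$ with $P=\{x,y\cdot z\}$, $Q=\{y,z\cdot x\}$, $R=\{z,x\cdot y\}$; cyclic permutation gives a circulant system with determinant $2^3+1^3+(-2)^3-3\cdot 2\cdot 1\cdot(-2)=13\neq 0$, so $P=Q=R=0$ and $\Ln\cdot\Ln\subseteq Z(\Ln)$ follows. Your sign-robustness remark is also correct: with the paper's convention $u\cdot v-v\cdot u=-\{u,v\}$ one obtains the identical relation $2P+Q-2R=0$. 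The net effect is a complete elementary proof that makes the external reference unnecessary, at the cost of a longer computation; the paper's citation buys brevity.
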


\begin{proof}
Since $u\cdot v-v\cdot u=-\{u,v\}$ we have $z\cdot (x\cdot y)-(x\cdot y)\cdot z=-\{z,x\cdot y\}$ and also
$x\cdot (z\cdot y)-x\cdot (y\cdot z)=-x\cdot \{z,y\}$. Hence the associator is given by
\begin{align*}
x\cdot (y\cdot z)-(x\cdot y)\cdot z & = x\cdot (y\cdot z)-z\cdot (x\cdot y)-\{z,x\cdot y\} \\
                                    & = x\cdot (y\cdot z)-x\cdot (z\cdot y)-\{z,x\cdot y\} \\
                                    & = x\cdot \{z,y\}-\{z,x\cdot y\} \\
                                    & = \{x\cdot z,y\},
\end{align*}
which vanishes if and only if $\Ln\cdot \Ln\subseteq Z(\Ln)$. This shows that $(1)\Leftrightarrow (2)$.
For the equivalence $(2)\Leftrightarrow (3)$, see Proposition $3.2$ in \cite{BEB}.
\end{proof}

Again we call an LR-structure on $\Ln$ {\em associative}, if one of the above conditions is satisfied.
We obtain the following interesting consequence.

\begin{prop}
Let $(V,\cdot)$ be an associative LR-structure on a Lie algebra $\Ln$. Then $\Ln$ is $2$-step nilpotent.
\end{prop}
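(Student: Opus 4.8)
The plan is to show directly that $\{\Ln,\Ln\}\subseteq Z(\Ln)$, which is exactly the assertion that $\Ln$ is $2$-step nilpotent (equivalently $\{\Ln,\{\Ln,\Ln\}\}=0$). The whole argument rests on combining two observations: that every Lie bracket is realized as a commutator of the LR-product, and that for an \emph{associative} LR-structure all products land in the center.

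First I would recall the defining antisymmetry relation of an LR-structure, namely $u\cdot v-v\cdot u=-\{u,v\}$, which already appears in the proof of Lemma~\ref{2.10}. Rearranging it gives $\{u,v\}=v\cdot u-u\cdot v$ for all $u,v\in V$. Hence each bracket $\{u,v\}$ is a difference of two products and therefore lies in the linear span $\Ln\cdot\Ln$ of all products. Passing to spans I obtain the inclusion $\{\Ln,\Ln\}\subseteq\Ln\cdot\Ln$.

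Next I would invoke the hypothesis that the LR-structure is associative. By Lemma~\ref{2.10} this is equivalent to $\Ln\cdot\Ln\subseteq Z(\Ln)$. Chaining the two inclusions yields $\{\Ln,\Ln\}\subseteq\Ln\cdot\Ln\subseteq Z(\Ln)$, so that $\{\Ln,\{\Ln,\Ln\}\}=0$ and $\Ln$ is $2$-step nilpotent.

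The point worth emphasizing is that there is essentially no computational obstacle: the result is a formal consequence of the fact that the commutator of the LR-product recovers the Lie bracket, so that the derived subalgebra sits inside $\Ln\cdot\Ln$, together with the centrality of $\Ln\cdot\Ln$ supplied by associativity. The only things to be careful about are the sign in the antisymmetry identity and the identification of ``$2$-step nilpotent'' with the inclusion $\{\Ln,\Ln\}\subseteq Z(\Ln)$.
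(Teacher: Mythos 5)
Your proof is correct, but it takes a genuinely different and more self-contained route than the paper. The paper's proof passes through the equivalence $(2)\Leftrightarrow(3)$ of Lemma~\ref{2.10} to conclude that an associative LR-structure is Poisson-admissible, and then quotes Corollary~$3.1$ of \cite{BEB}, which asserts that a Lie algebra underlying a Poisson-admissible algebra of this kind is $2$-step nilpotent; the nilpotency itself is thus outsourced to the literature. You instead use only the equivalence $(1)\Leftrightarrow(2)$ of Lemma~\ref{2.10}, which is proved in the paper, together with the antisymmetry identity $u\cdot v-v\cdot u=-\{u,v\}$: every bracket $\{u,v\}$ is a difference of products, so $\{\Ln,\Ln\}\subseteq\Ln\cdot\Ln\subseteq Z(\Ln)$, which is exactly $2$-step nilpotency. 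This is a complete, elementary argument with no external citation; note only that the sign in the antisymmetry identity is immaterial for the containment, since you only need each bracket to lie in the span of the products. What the paper's route buys in exchange is the explicit connection to Poisson-admissible algebras and the results of \cite{BEB}, which is thematically relevant to the rest of Section~$2$.
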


\begin{proof}
By Lemma $\ref{2.10}$  $(V,\cdot)$ is associative if and only it is Poisson-admissible. In this case
the Lie algebra $\Ln$ is $2$-step nilpotent by Corollary $3.1$ in \cite{BEB}. 
\end{proof}

\begin{prop}\label{2.12}
Let $(V,\cdot)$ be an associative LR-structure on a Lie algebra $\Ln$ and suppose that $Z(\Ln)\subseteq \{\Ln,\Ln\}$. 
Then we have
\[
\Ln\cdot (\Ln\cdot \Ln)=(\Ln\cdot \Ln)\cdot \Ln=0.
\]
In particular, all left multiplications $L(x)$ are nilpotent.
\end{prop}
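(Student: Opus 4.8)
The plan is to exploit the structure we already have for associative LR-structures, namely the vanishing of the associator modulo the center, together with the hypothesis $Z(\Ln)\subseteq\{\Ln,\Ln\}$. From the preceding results we know that an associative LR-structure forces $\Ln$ to be $2$-step nilpotent, so $\{\{\Ln,\Ln\},\Ln\}=0$, and the associativity condition $(1)$ of Lemma~\ref{2.10} gives $\Ln\cdot\Ln\subseteq Z(\Ln)$. These two facts are the raw material. My first step would be to translate the problem into the language of the multiplication operators: since the LR-structure satisfies both $x\cdot(y\cdot z)=y\cdot(x\cdot z)$ (the $L$-part of the axioms) and $(x\cdot y)\cdot z=(x\cdot z)\cdot y$ (the $R$-part), together with associativity, I want to show that any triple product vanishes.

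First I would prove $(\Ln\cdot\Ln)\cdot\Ln=0$. Take $w\in\Ln\cdot\Ln$. By associativity we have $\Ln\cdot\Ln\subseteq Z(\Ln)$, so $w\in Z(\Ln)$. The key idea is to combine this with the hypothesis $Z(\Ln)\subseteq\{\Ln,\Ln\}$ to write $w$ as a sum of brackets $\{u,v\}$, and then use the defining LR-relation $u\cdot v-v\cdot u=-\{u,v\}$ to express $w$ in terms of products; feeding this back into $w\cdot x$ and using associativity plus $2$-step nilpotency should collapse everything. Concretely, for $w=\{u,v\}=v\cdot u-u\cdot v$ I would compute $w\cdot x=(v\cdot u)\cdot x-(u\cdot v)\cdot x$ and rewrite each term by associativity as $v\cdot(u\cdot x)-u\cdot(v\cdot x)$, which by the $L$-commutativity relation $(\ref{com5}$-analogue for LR) is precisely $-\{v,u\}\cdot x$ evaluated one level up, or rather equals $\{u,v\}\cdot x$ shifted — the point being to set up a relation that forces $w\cdot x=0$ using that $u\cdot x,v\cdot x\in Z(\Ln)$ already annihilate further products. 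By associativity and commutativity of the two multiplication types this reduces to showing products of central elements vanish, which follows since $\Ln\cdot\Ln\subseteq Z(\Ln)$ and a third multiplication lands in $\Ln\cdot Z(\Ln)$.

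Next I would treat $\Ln\cdot(\Ln\cdot\Ln)=0$; by associativity this is immediate from $(\Ln\cdot\Ln)\cdot\Ln=0$ once we know $x\cdot(y\cdot z)=(x\cdot y)\cdot z$, so the two statements are really the same computation read through associativity. Finally, the nilpotency of every $L(x)$ follows formally: the identity $\Ln\cdot(\Ln\cdot\Ln)=0$ says exactly that $L(x)\circ L(y)=0$ on the image $\Ln\cdot\Ln$, i.e. $L(x)^2(\Ln)\subseteq L(x)(\Ln\cdot\Ln)=0$, so $L(x)^2=0$ and in particular each $L(x)$ is nilpotent.

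The main obstacle I anticipate is the first step, extracting genuine vanishing rather than mere centrality. Knowing $w\in Z(\Ln)$ only tells us $\{w,\Ln\}=0$; to get $w\cdot\Ln=0$ we must really use $Z(\Ln)\subseteq\{\Ln,\Ln\}$ to rewrite central elements as brackets and then convert brackets into differences of products, after which associativity and $2$-step nilpotency can be brought to bear. The delicate point is bookkeeping the two different commutativity relations (the $L$-side $x\cdot(y\cdot z)=y\cdot(x\cdot z)$ and the $R$-side $(x\cdot y)\cdot z=(x\cdot z)\cdot y$) simultaneously, since both are needed and it is easy to apply the wrong one; I would carry out this manipulation carefully on a single bracket generator $\{u,v\}$ and then extend by bilinearity.
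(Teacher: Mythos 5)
Your route is workable and genuinely different from the paper's, but the way you justify the decisive vanishing is circular, even though the correct justification is already sitting in your own computation. The paper's proof is a one-line application of the derivation axiom $x\cdot\{y,z\}=\{x\cdot y,z\}+\{y,x\cdot z\}$: by Lemma $\ref{2.10}$ one has $\Ln\cdot\Ln\subseteq Z(\Ln)\subseteq\{\Ln,\Ln\}$, hence $\Ln\cdot(\Ln\cdot\Ln)\subseteq\Ln\cdot\{\Ln,\Ln\}\subseteq\{\Ln\cdot\Ln,\Ln\}+\{\Ln,\Ln\cdot\Ln\}\subseteq\{Z(\Ln),\Ln\}=0$, and similarly for $(\Ln\cdot\Ln)\cdot\Ln$; no appeal to $2$-step nilpotency is needed. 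You instead rewrite a central element $w=\{u,v\}=v\cdot u-u\cdot v$ via the torsion identity and compute $w\cdot x=(v\cdot u)\cdot x-(u\cdot v)\cdot x=v\cdot(u\cdot x)-u\cdot(v\cdot x)$ by associativity. This is a valid alternative, and at this point you are done: the LR axiom $a\cdot(b\cdot c)=b\cdot(a\cdot c)$ makes the last expression vanish identically. Instead of saying this, however, you argue that the vanishing follows because ``$u\cdot x,v\cdot x\in Z(\Ln)$ already annihilate further products'' and that it ``reduces to showing products of central elements vanish \dots\ since a third multiplication lands in $\Ln\cdot Z(\Ln)$''. That step is wrong as stated: centrality of $u\cdot x$ only gives $\{u\cdot x,\Ln\}=0$, not $\Ln\cdot(u\cdot x)=0$, and $\Ln\cdot Z(\Ln)=0$ is (a strengthening of) exactly what you are trying to prove, so invoking it is circular. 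Replace that sentence by the single application of left-commutativity and your argument is complete; the remaining steps (getting $\Ln\cdot(\Ln\cdot\Ln)=0$ from $(\Ln\cdot\Ln)\cdot\Ln=0$ by associativity, and $L(x)^2=0$) are fine, and the import of $2$-step nilpotency from the preceding proposition can simply be dropped.
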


\begin{proof}
Because of $\Ln\cdot \Ln\subseteq Z(\Ln)$ and $Z(\Ln)\subseteq \{\Ln,\Ln\}$ we have
\begin{align*}
\Ln\cdot (\Ln\cdot \Ln) & \subseteq \Ln\cdot Z(\Ln) \subseteq \Ln\cdot \{\Ln,\Ln\} \\
                        & \subseteq \{\Ln\cdot \Ln,\Ln\} + \{\Ln,\Ln\cdot \Ln \} \\
                        & \subseteq \{Z(\Ln),\Ln\}=0.
\end{align*}
Similarly we have $(\Ln\cdot \Ln)\cdot \Ln=0$.
\end{proof}

\begin{rem}
Proposition $\ref{2.12}$ is no longer true for LR-structures which are not associative. The classification of
LR-structures on the $3$-dimensional Heisenberg Lie algebra $\Ln=\Ln_3(K)$, see Proposition $3.1$ of \cite{BU34},
gives a counterexample. Let $e_1,e_2,e_3$ be a basis of $V$ with $\{e_1,e_2\}=e_3$. The LR-structure $A_4$, given 
by the products
\begin{align*}
e_2\cdot e_1 & = -e_3,\quad  e_2\cdot e_3=e_3,\\
e_2\cdot e_2 & = e_2,\hspace{0.76cm} e_3\cdot e_2=e_3
\end{align*}
is not associative, since $e_2\cdot (e_1\cdot e_2)-(e_2\cdot e_1)\cdot e_2=e_3$, and $L(e_2)$ is not nilpotent. 
In particular, $\Ln\cdot (\Ln\cdot \Ln)\neq 0$.
\end{rem}

\section{CPA-structures on filiform Lie algebras}

Let $\Lg$ be a Lie algebra. The {\it lower central series} of $\Lg$ is given by
\[
\Lg^0=\Lg \supseteq \Lg^1 \supseteq \Lg^2 \supseteq \Lg^3  \supseteq \cdots
\]
where the ideals $\Lg^i$ are defined by $\Lg^i=[\Lg,\Lg^{i-1}]$ for all $i\ge 1$. The {\it derived series} of $\Lg$ is given by
\[
\Lg^{(0)}=\Lg \supseteq \Lg^{(1)} \supseteq \Lg^{(2)} \supseteq \Lg^{(3)}  \supseteq \cdots
\]
where the ideals $\Lg^{(i)}$ are defined by $\Lg^{(i)}=[\Lg^{(i-1)},\Lg^{(i-1)}]$
for all $i\ge 1$. 
The {\em nilpotency class} $c(\Lg)$ is the smallest integer $k\ge 1$ with $\Lg^k=0$ and the {\em solvability class}
$d(\Lg)$ is the smallest integer $k\ge 1$ with $\Lg^{(k)}=0$.

\begin{defi}
A Lie algebra $\Lg$ is called {\em metabelian} if it is solvable of class $d(\Lg)\le 2$. It is called {\em filiform} 
if it is nilpotent of class $c(\Lg)=\dim(\Lg)-1$. Furthermore $\Lg$ is called a {\em stem} Lie algebra if
$Z(\Lg)\subseteq [\Lg,\Lg]$.
\end{defi}

Let $\Lg$ be a complex filiform Lie algebra of dimension $n$. Then there exists an {\em adapted basis} $(e_1,\ldots ,e_n)$
of $\Lg$, which among other relations satisfies $[e_1,e_i]=e_{i+1}$ for $2\le i\le n-1$. For $j\ge 1$ define the characteristic 
ideals
\[
I_j={\rm span}\{e_j,\ldots ,e_n\},
\]
which refine the lower central series of $\Lg$ with $I_1=\Lg$ and $I_j=\Lg^{j-2}$ for $j\ge 3$. Note that
$I_n=Z(\Lg)\subseteq [\Lg,\Lg]=I_3$ and $I_{n+i}=0$ for all $i\ge 1$.

\begin{lem}\label{3.2}
Let $(V,\cdot)$ be a CPA-structure on a filiform Lie algebra $\Lg$ and $(e_1,\ldots ,e_n)$ be an adapted basis of
$\Lg$. Then $L(e_i)(I_j)\subseteq I_{j+1}$ for $1\le i,j\le n$. In particular we have $\Lg\cdot \Lg\subseteq I_2$ and
$\Lg\cdot I_2\subseteq I_3$. Furthermore it holds
\begin{align*}
\Lg\cdot \Lg \subseteq I_3 & \Longleftrightarrow e_1\cdot e_1\in I_3,\\
\Lg\cdot I_2 \subseteq I_4 & \Longleftrightarrow e_1\cdot e_2\in I_4, \; e_2\cdot e_2\in I_4.
\end{align*}
\end{lem}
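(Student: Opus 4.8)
The plan is to combine the two structural ``engines'' furnished by the CPA-axioms and then concentrate all of the difficulty at the top of the filtration.

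\emph{Setup.} By \eqref{com6} each left multiplication $L(e_i)$ is a derivation of $\Lg$, hence preserves every characteristic ideal; since $I_j=\Lg^{j-2}$ for $j\ge3$, this already gives $L(e_i)I_j\subseteq I_j$ for all $j\ge3$. By \eqref{com5} the map $L\colon\Lg\to\Der(\Lg)$ is a homomorphism, so from $[e_1,e_i]=e_{i+1}$ I get the recursion $L(e_{i+1})=[L(e_1),L(e_i)]$ for $2\le i\le n-1$; writing $A:=L(e_1)$, $B:=L(e_2)$ this means $L(e_j)=(\ad A)^{j-2}B$ for all $j\ge2$, and $[e_1,e_n]=0$ yields $(\ad A)^{n-1}B=0$. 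Finally, by commutativity \eqref{com4} the asserted inclusion $L(e_i)I_j\subseteq I_{j+1}$ is equivalent to $e_i\cdot e_j\in I_{i+1}\cap I_{j+1}=I_{\max(i,j)+1}$ for all $i,j$, and I will aim directly at this symmetric form.

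\emph{Reduction to the top two layers.} I will use that the left multiplications of a CPA-structure on a nilpotent Lie algebra are nilpotent operators (a known feature of CPA-structures on nilpotent Lie algebras). A nilpotent derivation preserving $I_j$ must act as $0$ on the one-dimensional quotient $I_j/I_{j+1}$, so $L(e_i)I_j\subseteq I_{j+1}$ holds automatically for every $j\ge3$. The statement therefore collapses to the cases $j=1,2$, i.e. to $\Lg\cdot\Lg\subseteq I_2$ and $\Lg\cdot I_2\subseteq I_3$. Using commutativity these read $\im A\subseteq I_2$ and $\im B\subseteq I_3$; passing to the induced nilpotent operators $\ov A,\ov B$ on the two-dimensional abelianization $\Lg/I_3=\langle\ov e_1,\ov e_2\rangle$, they say precisely that $\ov A$ has vanishing $\ov e_1$-row and that $\ov B=0$ --- equivalently, that the canonical line $I_2/I_3=\langle\ov e_2\rangle$ is annihilated by every $\ov L(e_i)$ and contains every image.

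\emph{The crux.} Since $\{\ov L(e_i)\}$ spans a nilpotent Lie algebra of nilpotent $2\times2$ matrices, Engel's theorem furnishes a common invariant line $W\subseteq\Lg/I_3$ with $\ov L(e_i)W=0$ and $\im\ov L(e_i)\subseteq W$ for all $i$. The entire difficulty is to show $W=\langle\ov e_2\rangle$. Here the lower-central-series bookkeeping is useless, because the $\ov e_1$-component of a product sits at the very top of the filtration and is annihilated by $\ad e_1$; I would instead detect it through \eqref{com6}: bracketing a product on the left with a basis vector $e_k$ $(k\ge2)$ makes its $\ov e_1$-component visible via $[e_k,e_1]=-e_{k+1}\neq0$, while choosing $y,z$ with $[y,z]=0$ kills the left side of \eqref{com6} and forces that component to vanish (for $n=4$, for instance, $e_2\cdot[e_2,e_3]=[e_2\cdot e_2,e_3]+[e_2,e_2\cdot e_3]$ already forces the $e_1$-coefficient of $e_2\cdot e_2$ to be zero). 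Combining such detections with the relations $\tr\ov A=\tr\ov B=0$, the commutativity identity $\ov A\,\ov e_2=\ov B\,\ov e_1$, and $[\ov A,\ov B]=\ov L(e_3)=0$ should pin $\ov A$ to zero $\ov e_1$-row and $\ov B$ to $0$. \emph{The main obstacle is making this bracket-detection uniform over an arbitrary filiform algebra}, whose non-principal structure constants $[e_i,e_j]$ are not under our control; this is exactly where the geometry of the filtration --- $[I_a,I_b]\subseteq I_{a+b-1}$ and $I_n=Z(\Lg)\subseteq I_3$ --- has to be exploited.

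\emph{Conclusion and the equivalences.} Once the two base cases hold, the reduction gives $L(e_i)I_j\subseteq I_{j+1}$ for all $i,j$, hence $e_i\cdot e_j\in I_{\max(i,j)+1}$; the ``in particular'' assertions are the cases $j=1,2$. The two stated equivalences are then immediate from this symmetric form: every product $e_i\cdot e_j$ already lies in $I_3$ except when $i=j=1$, so $\Lg\cdot\Lg\subseteq I_3\Leftrightarrow e_1\cdot e_1\in I_3$; and every product $e_i\cdot e_k$ with $k\ge2$ already lies in $I_4$ unless $\max(i,k)\le2$, i.e. unless $(i,k)\in\{(1,2),(2,2)\}$, so $\Lg\cdot I_2\subseteq I_4\Leftrightarrow e_1\cdot e_2\in I_4$ and $e_2\cdot e_2\in I_4$.
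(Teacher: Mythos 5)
Your handling of the layers $j\ge3$ (left multiplications are derivations by \eqref{com6}, hence preserve the characteristic ideals $I_j=\Lg^{j-2}$, and nilpotency kills the one-dimensional quotients $I_j/I_{j+1}$) and your deduction of the two displayed equivalences from the inclusion $L(e_i)(I_j)\subseteq I_{j+1}$ are both correct; the first of these is exactly the paper's argument. The problem is the part you yourself label the crux: you never actually prove $\Lg\cdot\Lg\subseteq I_2$ and $\Lg\cdot I_2\subseteq I_3$. Engel's theorem on $\Lg/I_3$ only produces \emph{some} common line $W$ with $\ov L(e_i)W=0$ and $\im\ov L(e_i)\subseteq W$; identifying $W$ with $\langle\ov e_2\rangle$ is the entire content of the remaining claim, and your ``bracket-detection'' is offered as a strategy with one worked instance ($n=4$) and an explicitly acknowledged obstacle (making the detection uniform over an arbitrary filiform algebra, whose non-principal structure constants you do not control). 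As written this is a gap, not a proof. A minor additional imprecision: nilpotency of the left multiplications is not a feature of CPA-structures on arbitrary nilpotent Lie algebras (abelian Lie algebras already give counterexamples); Theorem 3.6 of \cite{BU57} requires a nilpotent \emph{stem} Lie algebra, which filiform algebras are because $Z(\Lg)=I_n\subseteq I_3=[\Lg,\Lg]$.

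The idea you are missing is that the fight on $\Lg/I_3$ is unnecessary, because $I_2$ is itself a characteristic ideal of a filiform Lie algebra, not merely the terms $I_j=\Lg^{j-2}$ of the lower central series. For instance, for $n\ge5$ an adapted basis satisfies $[e_i,e_j]\in I_{i+j}$ up to exceptional brackets landing in $I_n$, whence $I_2=\{x\in\Lg:\,[x,\Lg^{n-4}]\subseteq\Lg^{n-2}\}$: indeed $[I_2,I_{n-2}]\subseteq I_n$, while $[e_1,e_{n-2}]=e_{n-1}\notin I_n$. Consequently every $L(e_i)$ preserves the \emph{whole} flag $I_1\supseteq I_2\supseteq\cdots\supseteq I_n$, all of whose successive quotients are one-dimensional, and nilpotency gives $L(e_i)(I_j)\subseteq I_{j+1}$ for every $j$ in one stroke, with no case distinction between $j\le2$ and $j\ge3$ and no need for the $2\times2$ Engel analysis. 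This is the paper's proof; your derivation of the two equivalences then applies verbatim.
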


\begin{proof}
Since the ideals $I_j$ are characteristic and the left multiplications $L(e_i)$ are derivations, it follows that
$L(e_i)(I_j)\subseteq I_j$ for all $1\le i,j\le n$. Since filiform Lie algebras are nilpotent stem Lie algebras, all left
multiplications are nilpotent by Theorem $3.6$ in \cite{BU57}. Hence we have $L(e_i)(I_j)\subseteq I_{j+1}$ for $1\le i,j\le n$.
This implies that $\Lg\cdot I_2\subseteq I_3$, so that $\Lg\cdot \Lg\subseteq I_3$ follows from $e_1\cdot e_1\in I_3$.
We also have $\Lg\cdot I_3\subseteq I_4$, so that $\Lg\cdot I_2\subseteq I_4$ follows from $e_1\cdot e_2,\, e_2\cdot e_2\in I_4$.
\end{proof}

\begin{lem}\label{3.3}
Let $(V,\cdot)$ be a CPA-structure on a filiform Lie algebra $\Lg$ and $(e_1,\ldots ,e_n)$ be an adapted basis of
$\Lg$ with $\Lg\cdot \Lg \subseteq I_3$ and $\Lg\cdot I_2 \subseteq I_4$. Suppose that for some $\ell\ge 0$ we have
\begin{align*}
e_1\cdot e_j & \in I_{j+\ell+2} \text{ for all } 3\le j \le n, \\
e_i\cdot e_j & \in I_{i+j+\ell} \text{ for all } (i,j)\neq (1,1),(1,2),(2,1),(2,2).
\end{align*} 
Then the same is true for $\ell+1$, i.e., we have
\begin{align*}
e_1\cdot e_j & \in I_{j+\ell+3} \text{ for all } 3\le j \le n, \\
e_i\cdot e_j & \in I_{i+j+\ell+1} \text{ for all } (i,j)\neq (1,1),(1,2),(2,1),(2,2).
\end{align*} 
\end{lem}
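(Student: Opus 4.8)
The plan is to prove the two inclusions for $\ell+1$ using only commutativity \eqref{com4} and the CPA-identity \eqref{com5}, by writing each relevant product through the ``$e_1$-recurrence'' coming from $[e_1,e_m]=e_{m+1}$ and then estimating the resulting inner products \emph{term by term}. The decisive point, which I would isolate first, is a linearity principle: if $u=\sum_{k\ge m}u_k e_k\in I_m$, then $e_i\cdot u=\sum_{k\ge m}u_k\,(e_i\cdot e_k)$, so a bound on $e_i\cdot u$ can be read off from the individual $\ell$-bounds on the $e_i\cdot e_k$ with $k\ge m$. Since the inner products that arise already lie in a deep ideal (so that the summation index $k$ is large), feeding them into the special bound $e_1\cdot e_k\in I_{k+\ell+2}$ or the generic bound $e_i\cdot e_k\in I_{i+k+\ell}$ yields a gain of order $\ell$, rather than the single level of Lemma~\ref{3.2}. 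This slack is exactly what upgrades the estimate from $\ell$ to $\ell+1$. Throughout I use that, by the standing hypotheses $\Lg\cdot\Lg\subseteq I_3$ and $\Lg\cdot I_2\subseteq I_4$, the three exceptional products satisfy $e_1\cdot e_1\in I_3$ and $e_1\cdot e_2,e_2\cdot e_2\in I_4$.

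I would organise the generic inclusion by the first index, using \eqref{com4} to assume it is the smaller one. For a product with first index $a\ge 3$ I write $e_a=[e_1,e_{a-1}]$ and apply \eqref{com5}:
\[
e_a\cdot e_b=e_1\cdot(e_{a-1}\cdot e_b)-e_{a-1}\cdot(e_1\cdot e_b).
\]
Expanding the inner factors $e_{a-1}\cdot e_b\in I_{(a-1)+b+\ell}$ and $e_1\cdot e_b\in I_{b+\ell+2}$ in the adapted basis and invoking the $\ell$-bounds on $e_1\cdot e_k$ and $e_{a-1}\cdot e_k$ (all indices $k$ being $\ge 3$) places both summands in $I_{a+b+2\ell+1}\subseteq I_{a+b+\ell+1}$, which is the claim. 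For first index $a=2$ I cannot split $e_1$ off $e_2$, so I instead put $e_b=[e_1,e_{b-1}]$ into the last slot of \eqref{com5} with $z=e_2$ and use \eqref{com4}:
\[
e_2\cdot e_b=e_1\cdot(e_2\cdot e_{b-1})-e_{b-1}\cdot(e_1\cdot e_2).
\]
The first term is routine from the $\ell$-hypothesis. In the second term the inner factor $e_1\cdot e_2\in I_4$ is exceptional, but its basis expansion only involves $e_k$ with $k\ge 4$, so the term-by-term estimate on $e_{b-1}\cdot e_k\in I_{(b-1)+k+\ell}$ still lands in $I_{b+\ell+3}=I_{2+b+\ell+1}$, as required. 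Both cases use only the $\ell$-hypothesis and together give the generic inclusion at level $\ell+1$.

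Finally I would treat the special products $e_1\cdot e_j$. Taking $z=e_1$ in the same device, \eqref{com5} and \eqref{com4} give
\[
e_1\cdot e_j=e_1\cdot(e_1\cdot e_{j-1})-e_{j-1}\cdot(e_1\cdot e_1).
\]
The first term is handled as before and lands well inside $I_{j+\ell+3}$. The second term $e_{j-1}\cdot(e_1\cdot e_1)$ is the crux, and I expect it to be the main obstacle: here $e_1\cdot e_1\in I_3$ is only shallow, so the naive estimate falls exactly one level short. My plan to overcome this is by ordering. Having already established the generic inclusion at level $\ell+1$, I apply it to each $e_{j-1}\cdot e_k$ occurring in the basis expansion of $e_{j-1}\cdot(e_1\cdot e_1)$; since $j-1\ge 2$ and $k\ge 3$ these are non-exceptional and never of the form $e_1\cdot e_k$ (so there is no circularity), and they satisfy $e_{j-1}\cdot e_k\in I_{(j-1)+k+\ell+1}\subseteq I_{j+\ell+3}$, which closes the estimate. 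The remaining checks are the boundary values $j=3$, where the inner factors $e_1\cdot e_2$ and $e_2\cdot e_2$ are the exceptional products in $I_4$ and are treated as above, and $j=n$, where the target ideal may already vanish; these I would verify directly.
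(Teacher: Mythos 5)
Your proposal is correct and follows essentially the same route as the paper: decompose via $e_m=[e_1,e_{m-1}]$ and \eqref{com5}, establish the generic bound $e_i\cdot e_j\in I_{i+j+\ell+1}$ first, and then use that upgraded bound to absorb the problematic term $e_{j-1}\cdot(e_1\cdot e_1)$ in the $e_1\cdot e_j$ case — which is exactly the paper's key step. The only (harmless) difference is that you replace the paper's induction on the second index by a direct term-by-term estimate using the level-$\ell$ hypothesis, which works since the inner factors already lie deep enough in the filtration.
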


\begin{proof}
We first show by induction on $j\ge 3$ that $e_2\cdot e_j\in I_{3+j+\ell}$. For $j=3$ we have using \eqref{com4} and
\eqref{com5}
\begin{align*}
e_2\cdot e_3 & = e_3\cdot e_2 \\
             & = [e_1,e_2]\cdot e_2 \\
            & = e_1\cdot (e_2\cdot e_2)-e_2\cdot (e_1\cdot e_2).
\end{align*}
Since $e_2\cdot e_2\in I_4$, $e_1\cdot e_2 \in I_4$ we have $e_1\cdot (e_2\cdot e_2) \in e_1\cdot I_4 \subseteq I_{4+\ell+2}=I_{6+\ell}$ 
by assumption and also $e_2\cdot (e_1\cdot e_2)\in e_2\cdot I_4\subseteq I_{6+\ell}$. It follows that $e_2\cdot e_3\in I_{6+\ell}$.
For the induction step $j\mapsto j+1$ we have
\begin{align*}
e_2\cdot e_{j+1} & = e_{j+1}\cdot e_2 \\
               & = [e_1,e_j]\cdot e_2 \\
               & = e_1\cdot (e_j\cdot e_2)-e_j\cdot (e_1\cdot e_2).
\end{align*}
By induction hypothesis we have $e_j\cdot e_2\in I_{3+j+\ell}$, so that $e_1\cdot (e_j\cdot e_2)\in I_{3+j+\ell+(\ell+2)}\subseteq
I_{4+j+\ell}$. Also $e_1\cdot e_2\in I_4$ implies that $e_j\cdot (e_1\cdot e_2)\in I_{j+4+\ell}$. It follows that 
$e_2\cdot e_{j+1}\in I_{4+j+\ell}$ and we have shown that $e_2\cdot e_j\in I_{3+j+\ell}$ for all $j\ge 3$. We can replace $e_2$
by $e_3,\ldots ,e_n$ and use induction as above. Then we obtain $e_i\cdot e_j\in I_{1+i+j+\ell}$ for all pairs $(i,j)$ which are not 
of the form $((1,j),(i,1),(2,2)$. To complete this for the remaining pairs $(i,j)$ it is enough to show the first 
claim, i.e., that $e_1\cdot e_j\in I_{j+\ell+3}$ for all $j\ge 3$. For $j=3$ we see that $e_1\cdot e_3\in I_{6+\ell}$ as above
and for the induction step  $j\mapsto j+1$ we have
\begin{align*}
e_1\cdot e_{j+1} & = e_{j+1}\cdot e_1 \\
             & = [e_1,e_j]\cdot e_1 \\
            & = e_1\cdot (e_j\cdot e_1)-e_j\cdot (e_1\cdot e_1).
\end{align*}
By induction hypothesis, $e_j\cdot e_1\in I_{j+\ell+3}$, so that $e_1\cdot (e_j\cdot e_1)\in I_{j+2\ell+5}\subseteq I_{4+j+\ell}$.
Furthermore $e_1\cdot e_1\in I_3$. Because of $e_j\cdot e_i\in I_{i+j+\ell+1}$ for $i,j\ge 3$, which we have shown before, it
follows that $e_j\cdot (e_1\cdot e_1)\in I_{3+j+\ell+1}=I_{4+j+\ell}$, and we are done. 
\end{proof}

We can state now our main result for filiform Lie algebras.

\begin{thm}\label{3.4}
Let $\Lg$ be a complex filiform Lie algebra of solvability class $d(\Lg)\ge 3$. Then every CPA-structure $(V,\cdot)$ on
$\Lg$ is associative and the algebra $(V,\cdot)$ is Poisson-admissible.
\end{thm}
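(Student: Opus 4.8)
The plan is to reduce the statement, via Lemma~\ref{2.5}, to the single identity $\Lg\cdot[\Lg,\Lg]=0$. Since $[\Lg,\Lg]=I_3$ for a filiform algebra, this amounts to $e_i\cdot e_j=0$ whenever at least one index is $\ge 3$, i.e. for every pair $(i,j)\notin\{(1,1),(1,2),(2,1),(2,2)\}$. The engine is Lemma~\ref{3.3}: once its hypotheses are known at $\ell=0$ and the preconditions $\Lg\cdot\Lg\subseteq I_3$ and $\Lg\cdot I_2\subseteq I_4$ hold, iterating the step $\ell\mapsto\ell+1$ yields, for every $\ell\ge 0$,
\[
e_1\cdot e_j\in I_{j+\ell+2}\ (3\le j\le n),\qquad e_i\cdot e_j\in I_{i+j+\ell}\ ((i,j)\ \text{non-special}).
\]
For each fixed pair the subscript eventually exceeds $n$, and $I_m=0$ for $m>n$, so all these products vanish. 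This gives $\Lg\cdot I_3=0$ and hence, by Lemma~\ref{2.5}, that $(V,\cdot)$ is associative and Poisson-admissible. Thus everything rests on two inputs: the three \emph{seed products} behind the preconditions, and the base case $\ell=0$.

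For the base case I would sharpen Lemma~\ref{3.2}. The representation law \eqref{com5} says $L([x,y])=[L(x),L(y)]$ for the left multiplications, so $[e_1,e_i]=e_{i+1}$ gives $L(e_{i+1})=[L(e_1),L(e_i)]$ and hence $L(e_k)=(\ad L(e_1))^{k-2}L(e_2)$. As $L(e_1)(I_j)\subseteq I_{j+1}$, each factor $\ad L(e_1)$ raises the filtration shift by one, so
\[
L(e_k)(I_j)\subseteq I_{j+k-1}\qquad(k\ge 2).
\]
This extra decay is exactly what closes the degree gaps. Running the computation of Lemma~\ref{3.3} in the same order ($e_2\cdot e_j$ first, then $e_3\cdot e_j,\dots,e_n\cdot e_j$, and only finally $e_1\cdot e_j$), the awkward terms $e_j\cdot(e_1\cdot e_2)$ and $e_j\cdot(e_1\cdot e_1)$ are bounded by $L(e_j)(I_4)\subseteq I_{j+3}$ together with products already shown to lie one step deeper, which lands every output in the required ideal and so establishes the $\ell=0$ hypotheses.

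The real difficulty is the three seeds $e_1\cdot e_1\in I_3$, $e_1\cdot e_2\in I_4$, $e_2\cdot e_2\in I_4$, which by Lemma~\ref{3.2} are equivalent to the preconditions. This is precisely the step that breaks down for metabelian filiform algebras, where one only obtains $[\Lg,\Lg]\cdot[\Lg,\Lg]=0$, so the hypothesis $d(\Lg)\ge 3$, i.e. $[I_3,I_3]\neq 0$, must enter here. I would isolate the lowest-degree components by writing $e_1\cdot e_1\equiv a_2e_2$, $e_1\cdot e_2\equiv c_3e_3$ and $e_2\cdot e_2\equiv b_3e_3$ modulo deeper ideals, then pass the CPA identities \eqref{com5} and the derivation law \eqref{com6} to the associated graded $\bigoplus_m I_m/I_{m+1}$ and evaluate them against a nonzero structure constant of some bracket $[e_p,e_q]\neq 0$ with $p,q\ge 3$, aiming to force $a_2=c_3=b_3=0$.

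I expect this seed computation to be the main obstacle: it has to be carried out for an arbitrary complex non-metabelian filiform algebra rather than for a single normal form, and it is the only place where the non-metabelian hypothesis is genuinely used. Once the seeds are available, the sharpened estimate supplies the $\ell=0$ base, the iteration of Lemma~\ref{3.3} drives every relevant product into $I_m=0$, and Lemma~\ref{2.5} completes the proof.
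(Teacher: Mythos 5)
Your overall architecture is the same as the paper's: reduce via Lemma~\ref{2.5} to the single identity $\Lg\cdot[\Lg,\Lg]=0$, iterate Lemma~\ref{3.3} until every non-special product falls into $I_m=0$, and concentrate the difficulty in the $\ell=0$ base case and the three seeds $e_1\cdot e_1\in I_3$, $e_1\cdot e_2\in I_4$, $e_2\cdot e_2\in I_4$. Your route to the base case is genuinely different and rather clean: since \eqref{com5} makes $L$ a Lie algebra homomorphism, $L(e_k)=(\ad L(e_1))^{k-2}L(e_2)$ and hence $L(e_k)(I_j)\subseteq I_{j+k-1}$ for $k\ge 2$, which avoids the paper's use of \eqref{com6} on $e_1\cdot[e_1,e_i]$ and therefore also avoids its split into the two subcases governed by the exceptional brackets $[e_i,e_{n-i+1}]=(-1)^{i+1}\al e_n$ for $n$ even. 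One inaccuracy: the term $e_j\cdot(e_1\cdot e_1)$ lies only in $L(e_j)(I_3)\subseteq I_{j+2}$ on the first pass, not in $I_{j+3}$ as you assert; you need a second round of sharpening (first get $L(e_2)(I_m)\subseteq I_{m+2}$ for all $m$ from the seeds and the $e_2\cdot e_j$ induction, then bootstrap to $L(e_k)(I_m)\subseteq I_{m+k}$ for $k\ge 2$) before the induction for $e_1\cdot e_j\in I_{j+2}$ closes. This is repairable, but as written the order of the argument does not quite work.

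The genuine gap is the one you flag yourself: the three seeds are never established, and they are not a peripheral verification but the entire content of the hypothesis $d(\Lg)\ge 3$. In the paper they occupy two full cases of the proof. If $\Lg\cdot\Lg\nsubseteq I_3$, one shows $\Lg$ is isomorphic to $L_n$, which is metabelian (Proposition A.4 of \cite{END}); if $\Lg\cdot\Lg\subseteq I_3$ but $\Lg\cdot I_2\nsubseteq I_4$, one first pins down $e_1\cdot e_3=e_4+z$ via $\la(\la-1)=0$ and then needs ``several involved inductions'' (Proposition A.2 of \cite{END}) to conclude that $\Lg$ is metabelian; both contradict $d(\Lg)\ge 3$. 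Your proposed shortcut --- pass to the associated graded and test the identities against one nonzero bracket $[e_p,e_q]$ with $p,q\ge 3$ to force the lowest-degree coefficients $a_2,c_3,b_3$ to vanish --- is only a plan, and the paper's actual argument propagates constraints through the whole bracket table rather than through a single graded relation, so there is no evidence the local computation suffices. The paper also disposes of $n=6$ by a separate direct computation, since the general case argument is run for $n\ge 7$. Until the seed step is carried out, the proof is a correct skeleton with an attractive alternative base case, but the decisive step is missing.
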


\begin{proof}
Let $\dim(\Lg)=n$. Since every filiform Lie algebra of dimension $n\le 5$ is metabelian, we have $n\ge 6$ 
because of $d(\Lg)\ge 3$. For $n=6$ we can prove the result by a direct computation.
Every $6$-dimensional filiform Lie algebra $\Lg$ has an adapted basis $(e_1,\ldots, e_6)$ such
that the Lie brackets are given by
\begin{align*}
[e_1,e_i] & = e_{i+1},\; 2\le i\le 5, \quad  [e_2,e_5]=-\al_3 e_6, \\
[e_2,e_3] & = \al_1e_5+\al_2e_6, \hspace{1.0cm} [e_3,e_4] =\al_3 e_6,\\
[e_2,e_4] & = \al_1 e_6,
\end{align*}
for some $\al_1,\al_2,\al_3 \in \C$. Here $\Lg$ is metabelian if and only if $\al_3=0$. So we have $\al_3\neq 0$ and 
$d(\Lg)=3$. Let $(V,\cdot)$ be a CPA-structure on $\Lg$. Denote the
matrices for the left multiplications by 
\[
L(e_k)=(\zeta_{ij}^k), \; 1\le i,j\le n.
\]
All matrices are lower-triangular with respect to the basis $(e_1,\ldots, e_6)$ because of Lemma $\ref{3.2}$.
The identities \eqref{com4},\eqref{com5},\eqref{com6} are equivalent to a system of equations in the variables $\zeta_{ij}^k$, 
which can be easily solved directly. Indeed, \eqref{com4} and \eqref{com6} yield linear equations, which enables one to solve
the quadratic equations coming from \eqref{com5}. We obtain the following CPA-structures:
\begin{align*}
e_1\cdot e_1 & = \zeta_{51}^1e_5+\zeta_{61}^1e_6,\\
e_1\cdot e_2 & = -\al_3\zeta_{51}^1e_5+\zeta_{62}^1e_6,\\
e_2\cdot e_2 & = \al_3^2\zeta_{51}^1e_5+\zeta_{62}^2e_6.
\end{align*}
Recall that $e_1\cdot e_2=e_2\cdot e_1$. This shows that $\Lg\cdot [\Lg,\Lg]=0$, so that $(V,\cdot)$ is associative and a 
Poisson-admissible algebra by Lemma $\ref{2.5}$. \\[0.2cm]
We may now assume that $(V,\cdot)$ is a CPA-structure on $\Lg$ with $n\ge 7$. Denote again by $L(e_k)=(\zeta_{ij}^k)$ the 
left multiplications. We distinguish two cases, namely whether or not $\Lg\cdot \Lg\subseteq I_3$. This does not depend 
on the choice of an adapted basis for $\Lg$. The first case is again divided into two subcases whether or not 
$\Lg\cdot I_2\subseteq I_4$. \\[0.2cm]
{\em Case 1a:} It holds $\Lg\cdot \Lg\subseteq I_3$ and $\Lg\cdot I_2\subseteq I_4$. An adapted basis has the property that
$[e_i,e_j]\in I_{i+j}$ holds for all $1\le i,j\le n$ except for the case where $n$ is even, where we have the following
exceptional Lie brackets
\[
[e_i,e_{n-i+1}]=(-1)^{i+1}\al e_n, \; 2\le i\le n-1.
\]
Here the scalar $\al\in \C$ is zero if and only if $\Lg^{\lceil \frac{n-4}{2}\rceil}$ is abelian. \\[0.2cm]
{\em Case 1a1:} $\Lg^{\lceil \frac{n-4}{2}\rceil}$ is abelian. In this case we have $[I_i,I_j]\subseteq I_{i+j}$ for
all $1\le i,j\le  n$ with the convention that $I_m=0$ for all $m>n$. Using \eqref{com6} we obtain
\begin{align*}
e_1\cdot e_{i+1} & = e_1\cdot [e_1,e_i] \\
               & =[e_1\cdot e_1,e_i]+[e_1,e_1\cdot e_i]
\end{align*}
for all $2\le i\le n-2$. We have $e_1\cdot e_{i+1}=\zeta_{i+2,i+1}^1e_{i+2}+z$ with $z\in I_{i+3}$. On the other hand it follows from
$e_1\cdot e_1\in I_3$ that $[e_1\cdot e_1,e_i]\in I_{i+3}$ for all $2\le i\le n-2$. The case $i=n-2$ requires the assumption
that $\Lg^{\lceil \frac{n-4}{2}\rceil}$ is abelian, because we need that $[I_3,I_{n-2}]\subseteq I_{n+1}=0$.
Furthermore we have $[e_1,e_1\cdot e_i] =\zeta_{i+1,i}^1e_{i+2}+w$ with $w\in I_3$, so that
\[
\zeta_{i+2,i+1}^1e_{i+2}+z=\zeta_{i+1,i}^1e_{i+2}+w
\]
for all $2\le i\le n-2$. This implies that $\zeta_{j+1,j}^1=\zeta_{3,2}^1$ for all $2\le j\le n-2$.
Since $e_1\cdot e_2\in I_4$ we have $\zeta_{3,2}^1=0$, so that 
\[
\zeta_{j+1,j}^1=0 \text{ for all } 2\le j\le n-2.
\]
Hence the first subdiagonal of $L(e_1)$ is equal to zero. The same argument for $L(e_2)$ gives that
$\zeta_{j+1,j}^2=\zeta_{3,2}^2$ for all $2\le j\le n-2$. However, $\zeta_{3,2}^2=0$ because we have
\begin{align*}
e_2\cdot e_3 & = [e_1,e_2]\cdot e_2 \\
            & = e_1\cdot (e_2\cdot e_2) -e_2\cdot (e_1\cdot e_2).
\end{align*}
Indeed, $e_2\cdot e_3=\zeta_{3,2}^2e_4+z$ with $z\in I_5$, and $e_1\cdot (e_2\cdot e_2), e_2\cdot (e_1\cdot e_2) \in I_5$. So we 
have shown so far that $e_1\cdot e_i  \in I_{i+2}$ and $e_2\cdot e_i  \in I_{i+2}$ for all $1\le i\le n$.
It follows by induction on $i\ge 3$ as before that also $e_i\cdot e_j\in I_{i+j}$ for all $i,j\ge 3$. Now we can apply 
Lemma $\ref{3.3}$ for $\ell=0$, then for $\ell=1$ and so on. Finally we obtain that $e_i\cdot e_j\in I_{n+1}=0$ for all pairs
$(i,j)\neq (1,1),(1,2),(2,1),(2,2)$. This exactly says that $\Lg\cdot [\Lg,\Lg]=0$, i.e., that $L(e_i)=0$ for all $i\ge 3$.
Hence $(V,\cdot)$ is associative and a Poisson-admissible algebra. \\[0.2cm]
{\em Case 1a2:}  $\Lg^{\lceil \frac{n-4}{2}\rceil}$ is not abelian. The proof is the same as above except for a slight
modification. We have the additional non-trivial Lie brackets $[e_i,e_{n-i+1}]=(-1)^{i+1}\al e_n$ for $2\le i\le n-1$ with
$\al\neq 0$. Therefore we can show $[e_1\cdot e_1,e_i]\in I_{i+3}$ only for all $2\le i\le n-3$, but not for $i=n-2$.
Consequently the first subdiagonal of $L(e_1)$ is not necessarily zero, but of the form $(0,\ldots,0,\la)$ for some
$\la\in \C$. Similarly for $L(e_2)$ it is of the form $(0,\ldots ,0,\mu)$. By induction we see that
\[
e_i\cdot e_j \in 
\begin{cases}
I_{i+j}, & \text{ if } i+j\neq n+1,\\
I_n, & \text{ if } i+j=n+1.
\end{cases}
\]
We want to show that $e_i\cdot e_j\in I_{i+j}$ for $2\le i,j\le n$. We already know this, except for the case where
$i\ge 2,j\ge 3$ with $i+j=n+1$, i.e., with $j=n-i+1$. But then $e_i\cdot e_{j-1}=e_i\cdot e_{n-i}\in I_n$ and 
$e_i\cdot e_j=e_i\cdot e_{n-i+1}\in I_n$, so that we can
write $e_i\cdot e_{j-1}=\zeta e_n$ and $e_i\cdot e_j=\eta e_n$. We have
\begin{align*}
\eta e_n & = e_j\cdot e_i \\
         & = e_1\cdot (e_{j-1}\cdot e_i)-e_{j-1}\cdot (e_1\cdot e_i) \\
         & = \zeta e_1\cdot e_n \\
         & =0,
\end{align*}
so that $\eta=0$ and hence $e_i\cdot e_j=0$ for these $i,j$. Hence we have $e_i\cdot e_j\in I_{i+j}$ for all $2\le i,j\le n$.
Similarly we see that $e_1\cdot e_j\in I_{j+2}$ for all $j\ge 3$. Now we can apply repeatedly Lemma $\ref{3.3}$, starting
with $\ell=0$. It follows again that $\Lg\cdot [\Lg,\Lg]=0$ and we are done. \\[0.2cm]
{\em Case 1b:} It holds $\Lg\cdot \Lg\subseteq I_3$ and $\Lg\cdot I_2\nsubseteq I_4$. It follows that
$\Lg$ is necessarily metabelian, which gives a contradiction. Hence this case cannot occur. The method of proof is
very similar to the one of case $1a$, so that we will only sketch a few steps. For a detailed proof see Proposition
$A.2$ in \cite{END}. First one can write $e_1\cdot e_3=\la e_4+z$ for some $\la\in \C$ and $z\in I_5$. Because of
\[
e_1\cdot e_3=e_1\cdot (e_2\cdot e_1)-e_2\cdot (e_1\cdot e_1)
\] 
we see that $\la e_4+z=\la^2e_4+w$ for $z,w\in I_5$ so that $\la(\la-1)=0$. By assumption $\Lg\cdot I_2\nsubseteq I_4$,
so that $\la\neq 0$ and hence $\la=1$. This implies that both $e_{\frac{n}{2}}\cdot e_{\frac{n}{2}}$, 
$e_{\frac{n}{2}}\cdot e_{\frac{n+2}{2}}$ are multiples of $e_n$ and $[e_{\frac{n}{2}},e_{\frac{n}{2}}]=0$. It follows that
$\Lg^{\lceil \frac{n-4}{2}\rceil}$ is abelian. Using several involved inductions we finally obtain that $\Lg$ is metabelian
and that $[\Lg,\Lg]\cdot [\Lg,\Lg]=0$. \\[0.2cm]
{\em Case 2:} It holds $\Lg\cdot \Lg\nsubseteq I_3$. In this case one can even show that $\Lg$ is isomorphic to the
standard graded filiform Lie algebra $L_n$, defined by the Lie brackets $[e_1,e_i]=e_{i+1}$ for $2\le i\le n-1$, see
Proposition $A.4$ in \cite{END}. Since $L_n$ is metabelian, we obtain a contradiction. This finishes the proof.
\end{proof}

\begin{rem}
By the theorem every CPA-structure on a non-metabelian filiform Lie algebra with adapted basis $(e_1,\ldots ,e_n)$
is of a very simple form. Because of $\Lg\cdot [\Lg,\Lg]=0$ the left multiplications $L(e_i)$ are zero for $i=3,\ldots ,n$,
so that the only non-zero products are given by $e_1\cdot e_1, e_1\cdot e_2=e_2\cdot e_1,e_2\cdot e_2$, which lie in
$Z([\Lg,\Lg])$ by Lemma $\ref{2.7}$. For example, in dimension $6$ we have $Z([\Lg,\Lg])=I_5={\rm span}\{e_5,e_6\}$, 
which recovers the result of our direct computation at the beginning of the proof of the theorem.
\end{rem}

The following example shows that Theorem $\ref{3.4}$ cannot be extended to other nilpotent stem Lie algebras, which 
are not filiform.

\begin{ex}
There exists a nilpotent stem Lie algebra $\Lg$ of solvability class $3$ and nilpotency class $5$ 
admitting a CPA-structure $(V,\cdot)$ which is not associative.
\end{ex}

Let $(e_1,\ldots ,e_9)$ be a basis of $V$ and define the Lie brackets of $\Lg$ by
\begin{align*}
[e_1,e_2] & = e_3, \quad  [e_2,e_3]= e_7, \\
[e_1,e_3] & = e_4, \quad  [e_2,e_5] =- e_9,\\
[e_1,e_4] & = e_5, \quad  [e_3,e_4] = e_9,\\
[e_1,e_6] & = e_8.
\end{align*}
A CPA-structure, which is not associative since $(e_2\cdot e_1)\cdot e_1-e_2\cdot (e_1\cdot e_1)=e_8$, is defined as follows:
\begin{align*}
e_1\cdot e_1 & = e_6, \quad  e_1\cdot e_6= e_8, \\
e_1\cdot e_2 & = e_6, \quad e_2\cdot e_2 =e_6,\\
e_1\cdot e_3 & = e_8, \quad  e_2\cdot e_3 = e_8. \\
\end{align*}
We also see that Theorem $\ref{3.4}$ does not hold for metabelian filiform Lie algebras, because we have the
following result.

\begin{prop}
Let $\Lg$ be a metabelian filiform Lie algebra of dimension $n\ge 4$. Then there exists a CPA-structure
$(V,\cdot)$ on $\Lg$ which is not associative. 
\end{prop}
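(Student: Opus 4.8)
The plan is to produce an explicit non-associative CPA-structure on $\Lg$, that is, one with $\Lg\cdot[\Lg,\Lg]\neq 0$; by Lemma~\ref{2.5} this is precisely the failure of associativity. First I would fix an adapted basis $(e_1,\dots,e_n)$, so that $[e_1,e_i]=e_{i+1}$ for $2\le i\le n-1$ and $[\Lg,\Lg]=I_3$ is abelian. Because $\Lg$ is metabelian filiform we have $[e_i,e_j]=0$ for $i,j\ge 3$, and the Jacobi identity applied to $e_1,e_2,e_j$ gives $[e_1,[e_2,e_j]]=[e_2,e_{j+1}]$, hence $[e_2,e_j]=\ad(e_1)^{\,j-3}[e_2,e_3]$ for all $j\ge 3$. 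Thus the entire bracket is governed by the single element $c:=[e_2,e_3]\in I_5$, which I will use to organise the construction.

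The key idea is to take $L(e_1)=\ad(e_1)$. This is an inner, hence genuine, nilpotent derivation satisfying $L(e_1)(I_j)\subseteq I_{j+1}$ as required by Lemma~\ref{3.2}, and it already yields $e_1\cdot e_i=e_{i+1}$ for $2\le i\le n-1$ together with $e_1\cdot e_1=e_1\cdot e_n=0$. Commutativity \eqref{com4} then forces $L(e_i)e_1=e_{i+1}$, while the representation identity \eqref{com5}, read as $L([x,y])=[L(x),L(y)]$, forces $L(e_{i+1})=[\ad(e_1),L(e_i)]$ for all $i\ge 2$. Consequently every $L(e_i)$ with $i\ge 3$ is determined by $D:=L(e_2)$, and since $\Der(\Lg)$ is closed under commutators, all these operators are automatically derivations as soon as $D$ is; in other words \eqref{com6} holds for free. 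So the construction reduces to choosing a single derivation $D$ with $De_1=e_3$. Writing out the derivation condition $D[e_1,e_i]=[De_1,e_i]+[e_1,De_i]$ gives the recursion $De_{i+1}=[e_3,e_i]+[e_1,De_i]$, which shows that $D$ is in turn completely determined by the vector $v:=De_2\in I_3$.

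What remains is to impose the remaining instances of \eqref{com4}, namely $L(e_i)e_j=L(e_j)e_i$ for $i,j\ge 2$, together with the relation-consistency $[L(e_2),L(e_j)]=L([e_2,e_j])$ coming from \eqref{com5} for the extra brackets. The mixed conditions involving $e_1$ already hold by construction, so everything reduces to a linear system in the single unknown $v\in I_3$, with coefficients built from the structure constants of $c=[e_2,e_3]$. I would solve this system by descending induction on the filtration degree: the equation attached to a pair $(i,j)$ lives in $I_{i+j}$, and modulo higher filtration it pins down one coordinate of $v$, so the system is essentially triangular and one can read off a solution layer by layer. As a sanity check, for $n=4$ one has $c=0$ and may take $De_2=0$, giving the symmetric product $e_1\cdot e_2=e_3$, $e_1\cdot e_3=e_4$; for $n=5$ with $c=\gamma e_5$ the choice $De_2=2\gamma e_4$ works, producing $e_2\cdot e_2=2\gamma e_4$, $e_2\cdot e_3=\gamma e_5$ and all other products among $e_2,\dots,e_5$ equal to zero.

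The hard part will be this last step: verifying that the linear system for $v$ is consistent for every metabelian filiform Lie algebra at once, that is, that the triangular elimination never meets a nonzero obstruction. This is where the explicit recursion $[e_2,e_j]=\ad(e_1)^{\,j-3}c$ and the metabelian vanishing $[e_i,e_j]=0$ ($i,j\ge 3$) must be used; I would also note that in even dimension the exceptional bracket $[e_2,e_{n-1}]=\pm\alpha e_n$ is automatically a member of this family, so no separate case occurs. Granting solvability, the product defined by the $L(e_i)$ is a CPA-structure, and it is non-associative for every $n\ge 4$ because $e_1\cdot e_3=[e_1,e_3]=e_4\neq 0$ with $e_3\in[\Lg,\Lg]$, so that $\Lg\cdot[\Lg,\Lg]\neq 0$. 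For the fully worked out products and the verification of consistency I would refer to \cite{END}.
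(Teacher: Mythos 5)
Your strategy coincides with the paper's: take $L(e_1)=\ad(e_1)$, let $L(e_2)$ agree with $\ad(e_2)$ on the derived algebra, and adjust the single product $e_2\cdot e_2$ so that the axioms close up; non-associativity is then immediate from $e_1\cdot e_3=e_4\neq 0$ with $e_3\in[\Lg,\Lg]$, via Lemma~\ref{2.5}. Your reductions are also sound: commutativity and \eqref{com5} do force $L(e_{i+1})=[\ad(e_1),L(e_i)]$ for $i\ge 2$, so the whole structure is determined by $v=L(e_2)e_2$, and \eqref{com6} propagates from $L(e_1),L(e_2)$ being derivations.

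However, as written the argument has a genuine gap, and you flag it yourself: you never exhibit $v$ for general $n$ and never verify that the resulting system of conditions is consistent, deferring ``the hard part'' to \cite{END}. For an existence statement this deferred step \emph{is} the proof, so it cannot be omitted. The paper closes the gap by invoking Bratzlavsky's normal form \cite{BRA}: an adapted basis can be chosen with $[e_2,e_k]=\al_{2,5}e_{k+2}+\cdots+\al_{2,n-k+3}e_n$, i.e.\ $[e_2,e_k]=\ad(e_1)^{k-3}[e_2,e_3]$ exactly as you observe, and then the explicit choice $e_2\cdot e_2=2\al_{2,5}e_4+\cdots+2\al_{2,n}e_{n-1}$ (twice the index-shift-down-by-one of $[e_2,e_3]$) works; your $n=5$ answer $e_2\cdot e_2=2\gamma e_4$ is the special case of this formula, so you were on the right track. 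With the formula in hand the verification is a short direct computation --- for instance $[e_1,e_2]\cdot e_2=e_1\cdot(e_2\cdot e_2)-e_2\cdot(e_1\cdot e_2)=2[e_2,e_3]-[e_2,e_3]=[e_2,e_3]=e_3\cdot e_2$, which is precisely where the factor $2$ comes from --- and no layer-by-layer elimination is needed. Without either the general formula or that elimination actually carried out, the existence claim is not established.
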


\begin{proof}
According to \cite{BRA} there exists an adapted basis $(e_1,\ldots , e_n)$ for $\Lg$ such that the Lie brackets
are given by
\begin{align*}
[e_1,e_i] & = e_{i+1}, \; 1\le i\le n-1 \\
[e_2,e_k] & = \al_{2,5}e_{2+k}+\cdots + \al_{2,n-k+3}e_n,\; 3\le k\le n-2, 
\end{align*}
with structure constants $\{\al_{2,k}\mid 5\le k\le n\}$. By convention we set these constants equal to zero for
$n=4$. We define an algebra $(V,\cdot)$ as follows:
\begin{align*}
e_1\cdot e_i & =[e_1,e_i],\; 1\le i\le n,\\
e_2\cdot e_j & =[e_2,e_j],\; 3\le j\le n,\\
e_2\cdot e_2 & = 2\al_{2,5}e_4+\cdots + 2\al_{2,n}e_{n-1}.
\end{align*}
It is easy to verify that $(V,\cdot)$ defines a CPA-structure on $\Lg$. It satisfies $[\Lg,\Lg]\cdot [\Lg,\Lg]=0$, but
not $\Lg\cdot [\Lg,\Lg]=0$.
\end{proof}

Note that the proposition does not hold for $n=3$, because all CPA-structures on the Heisenberg Lie algebra $\Ln_3(\C)$ are
associative, see Proposition $6.3$ in \cite{BU51}.

\begin{rem}\label{3.8}
One can show in general that all CPA-structures on a filiform Lie algebra $\Lg$ satisfy
$[\Lg,\Lg]\cdot [\Lg,\Lg]=0$. If $\Lg$ is not metabelian, this follows from Theorem $\ref{3.4}$. For metabelian
filiform Lie algebras it is proved in \cite{END}.
\end{rem}

We can apply the results now for special classes of filiform Lie algebras and determine all CPA-structures
explicitly. The explicit form is less complicated and is often better suited for applications than a classification 
up to CPA-isomorphism. Therefore we do not give such a classification. \\[0.2cm]
We consider the classes $L_n,Q_n,R_n,W_n$ of filiform Lie algebras discussed in \cite{GOK}, Chapter $4$.
We always assume that $(e_1,\ldots ,e_n)$ is an adapted basis.

\begin{defi}
The Lie algebra $L_n$ for $n\ge 3$ is defined by the Lie brackets
\[
[e_1,e_i]=e_{i+1}, \quad 2\le i\le n-1.
\]
The Lie algebra $Q_n$ for $n\ge 6$ even is defined by the Lie brackets
\begin{align*}
[e_1,e_i] & = e_{i+1}, \quad 2\le i\le n-1,\\
[e_i,e_{n-i+1}] & = (-1)^{i+1}e_n, \quad 2\le i\le \frac{n}{2}.
\end{align*}
The Lie algebra $R_n$ for $n\ge 5$ is defined by the Lie brackets
\begin{align*}
[e_1,e_i] & = e_{i+1}, \quad 2\le i\le n-1,\\
[e_2,e_i] & = e_{i+2}, \quad 3\le i\le n-2.
\end{align*}
The Witt Lie algebra $W_n$  for $n\ge 5$ is defined by the Lie brackets
\begin{align*}
[e_1,e_j] & = e_{j+1}, \quad 2\le j\le n-1,\\[0.1cm]
[e_i,e_j] & = \frac{6(j-i)}{j(j-1)\binom{j+i-2}{i-2}} e_{i+j}, \quad 2\le i\le \frac{n-1}{2},\; i+1\le j\le n-i.
\end{align*}
\end{defi}

To give a CPA-structure $(V,\cdot)$ on $\Lg$ explicitly it is enough to list the non-zero products $e_i\cdot e_j$
for all $1\le i\le j\le n$.

\begin{prop}\label{3.10}
Every CPA-structure on $L_n$, $n\ge 5$ with an adapted basis $(e_1,\ldots ,e_n)$ is either of type $1$ with products
\begin{align*}
e_1\cdot e_1 & = \al_2e_2+\cdots +\al_ne_n,\\
e_1\cdot e_2 & = \be e_{n-1}+\ga e_n,\\
e_1\cdot e_3 & = \be e_n,\\ 
e_2\cdot e_2 & = \de e_n,
\end{align*}
with arbitrary parameters $\al_i,\be,\ga,\de$ satisfying $\al_2\de+\be=0$, or of type $2$ with products
\begin{align*}
e_1\cdot e_1 & = \al_2e_2+\cdots +\al_ne_n,\\
e_1\cdot e_2 & = e_3+ \be e_{n-1}+\ga e_n,\\
e_1\cdot e_3 & = e_4+ \be e_n,\\ 
e_1\cdot e_k & = e_{k+1},\; 4\le k\le n-1,\\
e_2\cdot e_2 & = \de e_n,
\end{align*}
with arbitrary parameters $\al_i,\be,\ga,\de$ satisfying $\al_2\de-\be=0$. In both cases we have 
$[L_n,L_n]\cdot [L_n,L_n]=0$, but not $L_n\cdot [L_n,L_n]=0$ in general.
\end{prop}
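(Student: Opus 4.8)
The plan is to reduce everything to the three ``seed'' products $e_1\cdot e_1$, $e_1\cdot e_2$ and $e_2\cdot e_2$, and then pin these down using the representation identity \eqref{com5} together with commutativity \eqref{com4}. Since $L_n$ is metabelian, its derived algebra $[L_n,L_n]=I_3$ is abelian, and this is exactly what makes the reduction clean. By Lemma~\ref{3.2} every $L(e_i)$ raises the grading, so I may write $e_1\cdot e_1=\sum_{k\ge 2}\alpha_k e_k$, $e_1\cdot e_2=\sum_{k\ge 3}\beta_k e_k$ and $e_2\cdot e_2=\sum_{k\ge 3}d_k e_k$. First I would apply \eqref{com6} with $x=e_1$ and with $x=e_2$ to $e_{k+1}=[e_1,e_k]$; because $e_1\cdot e_1,\,e_1\cdot e_2\in I_2$ and $[I_2,e_k]=0$ for $k\ge 2$, the inhomogeneous terms $[e_1\cdot e_1,e_k]$ and $[e_1\cdot e_2,e_k]$ drop out, leaving the pure shift relations $e_1\cdot e_{k+1}=[e_1,e_1\cdot e_k]$ and $e_2\cdot e_{k+1}=[e_1,e_2\cdot e_k]$ for all $k\ge 2$. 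Writing $S=\ad(e_1)$ for the shift $e_j\mapsto e_{j+1}$, this gives $e_1\cdot e_k=S^{k-2}v$ and $e_2\cdot e_k=S^{k-2}u$ for $k\ge 2$, where $u=e_2\cdot e_2$ and $v=e_1\cdot e_2$; every remaining $L(e_i)$ is then forced by \eqref{com5} as $L(e_i)=[L(e_1),L(e_{i-1})]$.

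Next I would extract two master relations by computing $e_3\cdot e_1$ and $e_3\cdot e_2$ through $L(e_3)=[L(e_1),L(e_2)]$ and comparing with $e_1\cdot e_3=Sv$ and $e_2\cdot e_3=Su$. Using commutativity these become $(\mathrm{I})\colon Sv=e_1\cdot v-e_2\cdot w$ and $(\mathrm{II})\colon Su=e_1\cdot u-e_2\cdot v$, where $w=e_1\cdot e_1$ and the right-hand sides are expanded via the shift relations. The crucial step, and the one I expect to be the main obstacle, is to deduce from $(\mathrm{II})$ that $u=e_2\cdot e_2\in I_n=Z(\Lg)$, i.e. $e_2\cdot e_2=\delta e_n$. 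I would prove this by induction: assuming $d_3=\dots=d_{j-1}=0$ and reading off the coefficient of $e_{j+1}$ in $(\mathrm{II})$, one checks that $e_1\cdot u$ and $e_2\cdot v$ each contribute exactly $\beta_3 d_j$, so that they cancel on the right, while the left contributes $d_j$; hence $d_j=0$. This cancellation is precisely what decouples $u$ from $v$ and unlocks the rest of the argument.

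Once $u=\delta e_n$ is known, $e_2\cdot w$ collapses to $\alpha_2\delta e_n$ and $(\mathrm{I})$ simplifies to $(1-\beta_3)Sv-\sum_{k\ge 4}\beta_k S^{k-2}v=-\alpha_2\delta e_n$. Comparing the coefficient of $e_4$ gives $\beta_3^2=\beta_3$, whence $\beta_3\in\{0,1\}$, which is exactly the split into the two types. In each case a second induction on the lowest surviving index forces $\beta_4=\dots=\beta_{n-2}=0$: for any $p\le n-2$ with $\beta_p\ne 0$ the term $S^{p-2}v$ first appears in degree $2p-2>p+1$, so the degree-$(p+1)$ term $\beta_p e_{p+1}$ on the left has no partner and must vanish. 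Matching the top coefficient of $e_n$ then yields the stated linear constraint, $\alpha_2\delta+\beta=0$ when $\beta_3=0$ and $\alpha_2\delta-\beta=0$ when $\beta_3=1$, with $\beta=\beta_{n-1}$ and $\gamma=\beta_n$, while $\alpha_3,\dots,\alpha_n$ never enter and remain free. Substituting back into $e_1\cdot e_k=S^{k-2}v$ reproduces the two product tables verbatim.

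Finally I would verify the converse, that each tabulated product satisfies \eqref{com4}--\eqref{com6}; this is a routine check, most easily organised by noting that $S$ realises the shift and that $u,v\in I_3$, so every triple product lands in $I_n=Z(\Lg)$, where all brackets and the remaining instances of \eqref{com5} and \eqref{com6} become trivial. This converse is what confirms that $\alpha_3,\dots,\alpha_n$ are genuinely unconstrained. The final assertion is then immediate from the tables: for $i,j\ge 3$ the product $e_i\cdot e_j$ is a high shift of $u$ or $v$ and hence zero, so $[L_n,L_n]\cdot[L_n,L_n]=0$, whereas $e_1\cdot e_3$ equals $\beta e_n$ (type $1$) or $e_4+\beta e_n$ (type $2$), which is nonzero in general, so $L_n\cdot[L_n,L_n]\ne 0$.
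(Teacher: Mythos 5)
Your argument is correct, and it reaches the classification by a genuinely different route from the paper. The paper proceeds by induction on $n$: it uses $L_n/Z(L_n)\cong L_{n-1}$ together with $L_n\cdot Z(L_n)=0$ (Corollary $3.4$ of \cite{BU57}) to inherit the shape of the products modulo $e_n$, and then pins down the remaining $e_n$-coefficients by expressing $L(e_1),L(e_2)$ in the explicit basis of $\Der(L_n)$ from \cite{GOK} and evaluating \eqref{com5} on a few triples. You instead work in fixed dimension: the abelianness of $I_2$ in $L_n$ kills the terms $[e_1\cdot e_1,e_k]$ and $[e_2\cdot e_1,e_k]$ in \eqref{com6}, giving the shift relations $e_1\cdot e_k=S^{k-2}v$, $e_2\cdot e_k=S^{k-2}u$ with $S=\ad(e_1)$, after which everything reduces to the two master relations $Sv=e_1\cdot v-e_2\cdot w$ and $Su=e_1\cdot u-e_2\cdot v$ and a coefficient comparison. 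Your key cancellation in $(\mathrm{II})$ is sound (indeed the right-hand side $\sum_{k,j}(d_k\beta_j-\beta_kd_j)e_{j+k-2}$ vanishes identically by antisymmetry, so $Su=0$ and $u=\de e_n$ outright), and the dichotomy $\be_3\in\{0,1\}$ with the constraints $\al_2\de\pm\be=0$ comes out exactly as in the statement. What your approach buys is self-containedness: no base case $n=5$, no appeal to the derivation algebra of $L_n$, and no dimension induction; what the paper's approach buys is reusability, since the same quotient-plus-derivations template is applied again to $Q_n$, $R_n$, $W_n$. Two small points to tighten: your justification of $\be_p=0$ via ``$S^{p-2}v$ first appears in degree $2p-2$'' is literally correct only in type $1$; in type $2$ the relation reads $\sum_{k\ge 4}\be_kS^{k-2}v=\al_2\de e_n$ and the lowest term of $\be_pS^{p-2}v$ is already $\be_pe_{p+1}$, which must vanish for $p\le n-2$ by comparison with the right-hand side. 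Also, for $i,j\ge 3$ the identity $e_i\cdot e_j=S^{i+j-4}u$ needs one more application of \eqref{com6} with $x=e_i$ (using $e_i\cdot e_1\in I_2$), not just the two shift relations you stated; once noted, it makes $[L_n,L_n]\cdot[L_n,L_n]=0$ immediate. Like the paper, you defer the sufficiency check to a direct verification, which is fine.
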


\begin{proof}
A direct verification shows that the above products indeed define a CPA-structure on $L_n$ for all
given parameters. Conversely let $(V,\cdot)$ be a CPA-structure on $\Lg$. We will show by induction on $n\ge 5$
that $(V,\cdot)$ is either of type $1$ or of type $2$. For $n=5$ this follows from a direct computation.
For the induction step we use that $L_n/Z(L_n)\cong L_{n-1}$. Since $\dim (Z(L_n))=1$ it follows $L_n\cdot Z(L_n)=0$
from Corollary $3.4$ of \cite{BU57}. Hence $(V,\cdot)$ induces a CPA-structure on $L_{n-1}$, which by
induction hypothesis is already of type $1$ or of type $2$. We may assume that it is of type $1$, because the proof
for type $2$ works exactly the same way. Hence we know that the products $e_i\cdot e_j$ are of the required form up to
a certain multiple of $e_n$. Now since all left multiplications are derivations of $L_n$ and since we have an explicit
basis of $\Der(L_n)$ with respect to $(e_1,\ldots ,e_n)$, it follows that
\[
[L_n,L_n]\cdot [L_n,L_n]=0,\; I_2\cdot I_4=0,\; L_n\cdot I_5=0.
\]
Indeed, by Proposition $1$ in Chapter $4$ of \cite{GOK}, a basis of $\Der(\Lg)$ with respect to
$(e_1,\ldots ,e_n)$ is given by the $2n-1$ endomorphisms 
\[
\ad (e_1),\ldots ,\ad(e_{n-1}), t_1,t_2,t_3,h_2,\cdots ,h_{n-2}
\]
defined by
\begin{alignat*}{2}
t_1(e_1) & = 0,  \quad &  t_1(e_i) & = e_i,\; 2\le i\le n, \\
t_2(e_1) & = e_1, \quad &  t_2(e_i) & = (i-1)e_i,\; 2\le i\le n, \\
t_3(e_1) & = e_2, \quad &  t_3(e_i) & = 0,\; 2\le i\le n, \\
h_k(e_1) & = 0,  \quad  &  h_k(e_i) & = e_{i+k}, \; 2\le i\le n-k, \\
h_k(e_j) & = 0,  \quad  & n-k & < j\le n\\
\end{alignat*}
for all $2\le k\le n-2$. Hence we see that the non-zero products  $e_i\cdot e_j$ are given as follows:
\begin{align*}
e_1\cdot e_1 & = \al_2e_2+\cdots +\al_{n-1}e_{n-1}+\zeta_1e_n,\\
e_1\cdot e_2 & = \be e_{n-2} +\ga e_{n-1}+\zeta_2e_n,\\
e_1\cdot e_3 & = \be e_{n-1} +\zeta_3e_n,\\
e_1\cdot e_4 & = \zeta_4e_n,\\
e_2\cdot e_2 & = \de e_{n-1} +\zeta_5e_n,\\
e_2\cdot e_3 & = \zeta_6e_n.
\end{align*}
Because $L(e_1)$ and $L(e_2)$ are a linear combination of the above derivations, we immediately see that
$\zeta_3=\ga$, $\zeta_4=\beta$ and $\zeta_6=\de$. By \eqref{com5} we have
\begin{align*}
\de e_n & = e_3\cdot e_2 \\
        & = [e_1,e_2]\cdot e_2 \\
        & = e_1\cdot (e_2\cdot e_2)-e_2\cdot (e_1\cdot e_2) \\
        & = e_1\cdot (\de e_{n-1}+\zeta_5 e_n)-e_2\cdot (\be e_{n-2} +\ga e_{n-1}+\zeta_2e_n)
\end{align*}
which is equal to $0$ for $n\ge 5$. Hence we have $\de=0$. Similarly we obtain
\begin{align*}
\be e_{n-1} +\zeta_3e_n & = e_3\cdot e_1 \\
                      & = [e_1,e_2]\cdot e_1 \\
                      & = e_1\cdot (e_2\cdot e_1)-e_2\cdot (e_1\cdot e_1) \\
        & = e_1\cdot (\be e_{n-2} +\ga e_{n-1}+\zeta_2e_n)-e_2 \cdot (\al_2e_2+\cdots +\al_{n-1}e_{n-1}+\zeta_1e_n)\\
        & = -\al_2\zeta_6e_n,
\end{align*}
which gives $\be=0$ and $\al_2\zeta_6+\zeta_3=0$. So we obtain exactly the CPA-structure of type $1$ on $L_n$.
\end{proof}

\begin{rem}\label{3.11}
For $n=4$ these two types of CPA-structures in Proposition $\ref{3.10}$ can be merged, but with a different condition, 
namely $\be(\be-1)=\al_2\de$ and $L(e_4)=0$,
\[
L(e_1)=\begin{pmatrix} 0 & 0 & 0 & 0 \cr \al_2 & 0 & 0 & 0 \cr \al_3 & \be & 0 & 0 \cr \al_4 & \ga & \be & 0 \end{pmatrix},\; 
L(e_2)=\begin{pmatrix} 0 & 0 & 0 & 0 \cr 0 & 0 & 0 & 0 \cr \be & 0 & 0 & 0 \cr \ga & \de & 0 & 0 \end{pmatrix},\; 
L(e_3)=\begin{pmatrix} 0 & 0 & 0 & 0 \cr 0 & 0 & 0 & 0 \cr 0 & 0 & 0 & 0 \cr \be  & 0 & 0 & 0 \end{pmatrix}.
\]
\end{rem}

\begin{prop}\label{3.12}
Every CPA-structure on $Q_n$, $n\ge 6$ even, with an adapted basis $(e_1,\ldots ,e_n)$ is given as follows:
\begin{align*}
e_1\cdot e_1 & = \al e_{n-1}+\be e_n,\\
e_1\cdot e_2 & = -\al e_{n-1}+\ga e_n,\\
e_2\cdot e_2 & = \al e_{n-1}+ \de e_n,
\end{align*}
with arbitrary parameters $\al,\be,\ga,\de$. 
\end{prop}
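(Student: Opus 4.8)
The plan is to let Theorem \ref{3.4} do the heavy lifting, reduce to a few products, pin down where they must lie by a center computation, and then extract the two scalar relations $\al'=-\al$, $\al''=\al$ from the derivation property.

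First I would record that $Q_n$ is filiform and, for $n\ge 6$ even, non-metabelian: the exceptional brackets $[e_i,e_{n-i+1}]=(-1)^{i+1}e_n$ with $3\le i\le n/2$ give $[I_3,I_3]=\s\{e_n\}\neq 0$, so $d(Q_n)=3$. Hence Theorem \ref{3.4} applies and every CPA-structure $(V,\cdot)$ on $\Lg=Q_n$ is associative, i.e. $\Lg\cdot[\Lg,\Lg]=0$. Since $[\Lg,\Lg]=I_3=\s\{e_3,\dots,e_n\}$, this forces $e_i\cdot e_j=0$ whenever $i\ge 3$ or $j\ge 3$, so the only possibly non-zero products are $e_1\cdot e_1$, $e_1\cdot e_2=e_2\cdot e_1$, and $e_2\cdot e_2$; by Lemma \ref{2.7} all three lie in $Z([\Lg,\Lg])=Z(I_3)$.

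Next I would compute $Z(I_3)$ explicitly. Inside $I_3$ the only non-zero brackets are the exceptional ones, which pair $e_i$ with $e_{n-i+1}$ for every $3\le i\le n-2$ and equal a non-zero multiple of $e_n$; thus $\sum_{k\ge 3}c_ke_k$ is central in $I_3$ iff $c_k=0$ for $3\le k\le n-2$, giving $Z(I_3)=\s\{e_{n-1},e_n\}$. I may therefore write
\[
e_1\cdot e_1=\al e_{n-1}+\be e_n,\quad e_1\cdot e_2=\al'e_{n-1}+\ga e_n,\quad e_2\cdot e_2=\al''e_{n-1}+\de e_n,
\]
and it remains to show $\al'=-\al$ and $\al''=\al$. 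These come from \eqref{com6}, i.e. from $L(e_1),L(e_2)$ being derivations. Applying $L(e_1)$ to $[e_1,e_2]=e_3$ yields $0=L(e_1)(e_3)=[e_1\cdot e_1,e_2]+[e_1,e_1\cdot e_2]$; using $[e_{n-1},e_2]=e_n$, $[e_1,e_{n-1}]=e_n$ and centrality of $e_n$, the right side collapses to $(\al+\al')e_n$, so $\al'=-\al$. Applying $L(e_2)$ to the same bracket gives $0=[e_1\cdot e_2,e_2]+[e_1,e_2\cdot e_2]=(\al'+\al'')e_n$, whence $\al''=-\al'=\al$. This produces exactly the stated products.

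For the converse I would verify that these products define a CPA-structure for all $\al,\be,\ga,\de$: \eqref{com4} holds by construction; \eqref{com5} holds automatically, since $[e_i,e_j]\in I_3$ annihilates the left side while each inner product $e_j\cdot e_k$ already lies in $I_3$ and is killed on the right; and \eqref{com6} is the assertion that $L(e_1),L(e_2)$ are derivations, checked on each bracket type. The step I expect to be the main (if modest) obstacle is confirming that imposing the derivation property on the exceptional brackets $[e_i,e_{n-i+1}]$, not just on $[e_1,e_2]$, imposes no further constraints on $\be,\ga,\de$; this works out because $L(e_1)e_i=L(e_2)e_i=0$ for $i\ge 3$ and the brackets of $e_{n-1},e_n$ against the relevant basis vectors vanish, so $\al,\be,\ga,\de$ are genuinely free and the classification is complete.
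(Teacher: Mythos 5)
Your proof is correct. It shares its first step with the paper -- both reduce to Theorem \ref{3.4}, so that $Q_n\cdot[Q_n,Q_n]=0$ and only $e_1\cdot e_1$, $e_1\cdot e_2$, $e_2\cdot e_2$ survive -- but the second step is genuinely different. The paper pins down these three products by citing an explicit basis of $\Der(Q_n)$ (in the style of Proposition \ref{3.10}, where the basis of $\Der(L_n)$ from \cite{GOK} is written out) and reading off the admissible images of $e_1$ and $e_2$ under the left multiplications. You instead invoke Lemma \ref{2.7} to place the products in $Z([\Lg,\Lg])$, compute $Z(I_3)=\s\{e_{n-1},e_n\}$ directly from the exceptional brackets, and then extract the two relations among the $e_{n-1}$-coefficients from \eqref{com6} applied to $[e_1,e_2]=e_3$; I checked the sign bookkeeping ($[e_{n-1},e_2]=e_n$, $[e_1,e_{n-1}]=e_n$) and it gives $\al'=-\al$, $\al''=\al$ as claimed. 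Your route is more self-contained, since it needs no external description of $\Der(Q_n)$, and it makes transparent why exactly one linear relation ties the three $e_{n-1}$-coefficients together while $\be,\ga,\de$ remain free; the paper's route is shorter on the page but outsources the real computation to the derivation algebra. One small caveat on your converse: the phrase ``the brackets of $e_{n-1},e_n$ against the relevant basis vectors vanish'' covers the exceptional brackets with both indices in $\{3,\dots,n-2\}$, but for the bracket $[e_1,e_2]=e_3$ the terms $[e_{n-1},e_2]$ and $[e_1,e_{n-1}]$ do not vanish individually -- they cancel precisely because of the relations $\al'=-\al$ and $\al''=\al$ you just derived, which is worth stating explicitly.
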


\begin{proof}
Let $(V,\cdot)$ be a CPA-structure on $Q_n$. Then $Q_n\cdot [Q_n,Q_n]=0$ by Theorem $\ref{3.4}$. Now using  
a basis of $\Der(Q_n)$ we obtain that the products are given as above by a straightforward calculation. 
\end{proof}

In the same way, by using the explicit form ot the derivation algebra, one can show the following result.

\begin{prop}\label{3.13}
Every CPA-structure on $R_n$, $n\ge 6$ with an adapted basis $(e_1,\ldots ,e_n)$ is either of type $1$ with products
\begin{align*}
e_1\cdot e_1 & = \al_3e_3+\cdots +\al_ne_n,\\
e_1\cdot e_2 & = \al_3e_4+\cdots +\al_{n-2}e_{n-1}+\be e_n,\\
e_2\cdot e_2 & = \al_3e_5+\cdots +\al_{n-3}e_{n-1}+\ga e_n, 
\end{align*}
or of type $2$ with products
\begin{align*}
e_1\kringel e_i  & = e_1\cdot e_i+[e_1,e_i],\; 1\le i\le n,\\
e_2\kringel e_2  & = 2e_4+ e_2\cdot e_2,\\
e_2\kringel e_i  & = [e_2,e_i],\; 3\le i\le n,\\ 
\end{align*}
where $e_i\cdot e_j$ is a CPA-structure of type $1$.
\end{prop}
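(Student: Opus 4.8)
The plan is to follow the template of Proposition~\ref{3.10} for $L_n$ rather than that of Proposition~\ref{3.12}, because $R_n$ is metabelian (its derived algebra $[\Lg,\Lg]=I_3=\s\{e_3,\dots,e_n\}$ is abelian) and so Theorem~\ref{3.4} is not available. First I would check by a direct computation that the type~$1$ and type~$2$ products satisfy \eqref{com4}, \eqref{com5} and \eqref{com6}; for type~$2$ the only point needing care is that the extra summands $[e_1,e_i]$ and $2e_4$ feed into \eqref{com5} through the bracket of $R_n$, but since the underlying $e_i\cdot e_j$ is of type~$1$ this is routine. At the outset I would record the structural facts I intend to use: $Z(R_n)=\s\{e_n\}$ is one-dimensional with $R_n/Z(R_n)\cong R_{n-1}$, and by Remark~\ref{3.8} every CPA-structure on $R_n$ satisfies $[\Lg,\Lg]\cdot[\Lg,\Lg]=0$, that is, $e_i\cdot e_j=0$ for all $i,j\ge 3$.

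For the converse, let $(V,\cdot)$ be an arbitrary CPA-structure on $R_n$. By Lemma~\ref{3.2} each left multiplication $L(e_i)$ is a nilpotent, lower-triangular derivation of $R_n$, and by \eqref{com5} the map $L\colon\Lg\to\Der(R_n)$ is a Lie algebra homomorphism. Since $R_n$ is generated as a Lie algebra by $e_1$ and $e_2$, the whole product is already determined by the two derivations $L(e_1),L(e_2)$, equivalently by the three elements $a=e_1\cdot e_1$, $b=e_1\cdot e_2=e_2\cdot e_1$ and $c=e_2\cdot e_2$: the products $e_1\cdot e_j,e_2\cdot e_j$ with $j\ge 3$ are then forced by the derivation property applied to $e_{j}=[e_1,e_{j-1}]$, while all remaining products vanish by the previous paragraph.

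To determine $a,b,c$ I would insert the explicit basis of $\Der(R_n)$ from \cite{GOK}, Chapter~$4$. Expanding $L(e_1)$ and $L(e_2)$ as nilpotent combinations of these basis derivations constrains the coefficients of $a,b,c$ and in particular produces the index-shift relations visible in the statement (the coefficient $\al_3$ of $e_3$ in $a$ reappearing as the coefficient of $e_4$ in $b$ and of $e_5$ in $c$). The type~$1$/type~$2$ dichotomy then comes from isolating the instance $e_3\cdot e_1=e_1\cdot(e_2\cdot e_1)-e_2\cdot(e_1\cdot e_1)$ of \eqref{com5} and comparing the coefficients of $e_4$ on both sides: the left side contributes $\la$ and the right side $\la^2$, where $\la$ is the coefficient of $e_3$ in $b=e_1\cdot e_2$, so that $\la(\la-1)=0$. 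The root $\la=0$ gives type~$1$, and $\la=1$ gives type~$2$, in which $e_1\cdot e_i$ picks up the summand $[e_1,e_i]$ and $e_2\cdot e_2$ the summand $2e_4$; the surviving linear equations then collapse the structure into exactly the two stated normal forms. Because the explicit lists of coefficients $\al_3,\dots,\al_n,\be,\ga$ grow with $n$, I would organise this last step as an induction on $n\ge6$ (base case $n=6$ by direct solution of the finite system in the entries $\zeta_{ij}^k$): here $\dim Z(R_n)=1$ gives $R_n\cdot Z(R_n)=0$ by Corollary~$3.4$ of \cite{BU57}, so $(V,\cdot)$ descends to a CPA-structure on $R_{n-1}\cong R_n/Z(R_n)$, which by the induction hypothesis is of type~$1$ or type~$2$, leaving only the $e_n$-components to be fixed.

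The main obstacle I anticipate is the bookkeeping of the central $e_n$-component throughout: verifying that the quotient structure on $R_{n-1}$ is well defined and genuinely of one of the two types, and that the last free scalars are constrained exactly as stated rather than over- or under-determined. The quadratic $\la(\la-1)=0$ is cheap once the correct \eqref{com5}-instance is singled out; the delicate part is confirming that \emph{both} roots extend consistently to all of $L(e_1),L(e_2)$ through the derivation-algebra description, so that no hidden compatibility forces additional relations among the $\al_k,\be,\ga$ beyond those recorded in the statement.
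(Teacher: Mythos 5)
Your plan is correct and matches what the paper actually does: the paper's entire proof of Proposition~\ref{3.13} is the one-line remark that it follows ``in the same way, by using the explicit form of the derivation algebra,'' and your fleshed-out version (direct verification of the two families, reduction to $L(e_1),L(e_2)$ via $[\Lg,\Lg]\cdot[\Lg,\Lg]=0$ and the derivation basis of $R_n$, the quadratic $\la(\la-1)=0$ producing the two types, and induction on $n$ through $R_n/Z(R_n)\cong R_{n-1}$ with $\dim Z(R_n)=1$) is exactly the template of Proposition~\ref{3.10}. You are also right, and more careful than the paper's phrasing suggests, in noting that the $Q_n$-style shortcut through Theorem~\ref{3.4} used for Proposition~\ref{3.12} is unavailable here because $R_n$ is metabelian, so the input $[\Lg,\Lg]\cdot[\Lg,\Lg]=0$ must come from Remark~\ref{3.8} instead.
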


\begin{rem}
There is a third type of CPA-structure on $R_n$ for $n=5$, given by
\begin{align*}
e_1\cdot e_1 & = -\frac{1}{2}e_2+ \al e_3+ \be e_4 + \ga e_5,\\
e_1\cdot e_2 & = \frac{1}{2} e_3+\de e_4 +\ep e_5,\\
e_1\cdot e_3 & = \frac{1}{2} e_4+(\de -\al) e_5,\\
e_2\cdot e_2 & = \frac{1}{2} e_4+(\de -\al) e_5.
\end{align*}
\end{rem}

Finally, the result for $W_n$ is given as follows.

\begin{prop}\label{3.15}
Every CPA-structure on $W_n$, $n\ge 7$ with an adapted basis $(e_1,\ldots ,e_n)$ is given as follows:
\begin{align*}
e_1\cdot e_1 & = \al e_{n-2}+\be e_{n-1}+ \ga e_n,\\
e_1\cdot e_2 & = \frac{6(n-4)}{(n-2)(n-3)} \al e_{n-1}+\de e_n,\\
e_2\cdot e_2 & = \ep e_n.
\end{align*}
\end{prop}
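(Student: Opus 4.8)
The plan is to follow the route used for $Q_n$ in Proposition~\ref{3.12}: first reduce to three surviving products via Theorem~\ref{3.4}, then locate them in the centre of the derived algebra, and finally extract the remaining linear relations from the fact that the left multiplications are derivations.

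First I would note that for $n\ge 7$ the Witt algebra $W_n$ is a non-metabelian complex filiform Lie algebra: it is filiform by construction, and $[e_3,e_4]\neq 0$ shows that the derived algebra $[W_n,W_n]=I_3$ is non-abelian, hence $d(W_n)\ge 3$. Thus Theorem~\ref{3.4} applies and the CPA-structure is associative, $W_n\cdot[W_n,W_n]=0$. Consequently $e_i\cdot e_j=0$ whenever $i\ge 3$ or $j\ge 3$, so the whole structure is carried by $e_1\cdot e_1$, $e_1\cdot e_2=e_2\cdot e_1$ and $e_2\cdot e_2$; equivalently $L(e_i)=0$ for $i\ge 3$, while $L(e_1)$ and $L(e_2)$ annihilate $I_3$. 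By Lemma~\ref{2.7} these three products lie in $Z([W_n,W_n])=Z(I_3)$.

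The next step is to compute this centre: I claim $Z(I_3)=\s\{e_{n-2},e_{n-1},e_n\}=I_{n-2}$. The inclusion $\supseteq$ is immediate, since for $k\ge n-2$ and $j\ge 3$ the bracket $[e_k,e_j]$ is a multiple of $e_{k+j}$ with $k+j>n$, hence zero. For $\subseteq$, I would take $z=\sum_{k\ge 3}z_ke_k\in Z(I_3)$ and let $m\le n-3$ be the smallest index with $z_m\neq 0$; bracketing $z$ with $e_3$ (and with $e_4$ in the borderline case $m=3$, where $[e_3,e_3]=0$) isolates a lowest-degree term equal to $z_m$ times a nonzero Witt structure constant times $e_{m+3}$ (respectively a nonzero multiple of $e_7$), which cannot vanish, a contradiction. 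The nonvanishing of $[e_3,e_4]$ is exactly where the hypothesis $n\ge 7$ enters, since for $n=6$ one has $[e_3,e_4]=0$ and the centre is strictly larger.

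Finally, writing $e_1\cdot e_1=\al e_{n-2}+\be e_{n-1}+\ga e_n$, $e_1\cdot e_2=\al_1e_{n-2}+\be_1e_{n-1}+\ga_1e_n$ and $e_2\cdot e_2=\al_2e_{n-2}+\be_2e_{n-1}+\ga_2e_n$, I would exploit that the residual content of \eqref{com5} and \eqref{com6} is that $L(e_1),L(e_2)$ are commuting derivations. Because they kill $I_3$ and send $e_1,e_2$ into $I_{n-2}\subseteq Z(I_3)$, every derivation identity is automatic except $D([e_1,e_2])=D(e_3)=0$. For $D=L(e_1)$ this reads $[e_1\cdot e_1,e_2]+[e_1,e_1\cdot e_2]=0$ and for $D=L(e_2)$ it reads $[e_1\cdot e_2,e_2]+[e_1,e_2\cdot e_2]=0$. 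Expanding with $[e_2,e_{n-2}]=\tfrac{6(n-4)}{(n-2)(n-3)}e_n$, $[e_1,e_{n-2}]=e_{n-1}$ and $[e_1,e_{n-1}]=e_n$ (all other brackets involved landing in degree $>n$), the first relation forces $\al_1=0$ and $\be_1=\tfrac{6(n-4)}{(n-2)(n-3)}\al$, and the second then forces $\al_2=\be_2=0$; setting $\de=\ga_1$ and $\ep=\ga_2$ gives precisely the stated products, with $\al,\be,\ga,\de,\ep$ free. The converse, that each such choice really is a CPA-structure, is then a direct verification, immediate from the facts that all products lie in $Z(I_3)$ and $L(e_i)=0$ for $i\ge 3$. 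I expect the main obstacle to be the centre computation: one has to handle the structure constants $\tfrac{6(j-i)}{j(j-1)\binom{j+i-2}{i-2}}$ carefully enough to rule out any central $e_k$ with $k\le n-3$, and to see that $n\ge 7$ is exactly the threshold for which this holds.
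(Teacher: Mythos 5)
Your proof is correct and follows essentially the same route the paper intends: Theorem \ref{3.4} gives associativity, Lemma \ref{2.7} places the three surviving products in $Z([W_n,W_n])$, and the derivation property of $L(e_1),L(e_2)$ applied to $e_3=[e_1,e_2]$ yields exactly the stated linear relations. Your version merely replaces the paper's appeal to an explicit basis of $\Der(W_n)$ by a direct computation of $Z([W_n,W_n])=\s\{e_{n-2},e_{n-1},e_n\}$, which is a sound (and correctly executed) way to organize the same calculation, including the observation that $[e_3,e_4]\neq 0$ is where $n\ge 7$ is used.
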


\section{CPA-structures on Lie algebras of strictly upper-triangular matrices}

In this section we study CPA-structures on the Lie algebra $\Ln_n(K)$ of strictly upper-triangular $n\times n$-matrices over a 
field $K$. This Lie algebra is nilpotent of class $c=n-1$ and dimension $\frac{n(n-1)}{2}$. It has a basis 
$\{ E_{j,k} \mid 1\le j<k\le n\}$, where the matrices $E_{i,j}$ have an entry $1$ at position $(i,j)$ and $0$ otherwise. 
The non-trivial Lie brackets are given by
\begin{align*}
[E_{j,k},E_{k,\ell}] & = E_{i,\ell},\; 1\le j<k<\ell\le n.
\end{align*}

The following lemma is well known.

\begin{lem}\label{4.1}
Let $\Lg$ be the Lie algebra $\Ln_n(K)$ with $n\ge 4$ and suppose that $[z,[\Lg,\Lg]]=0$ for some $z\in \Lg$. Then we have
$z\in \Lg^{n-4}$.
\end{lem}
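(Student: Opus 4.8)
The plan is to identify the hypothesis $[z,[\Lg,\Lg]]=0$ as the statement that $z$ lies in the centralizer of the derived subalgebra, and then to compute that centralizer directly in the matrix basis $\{E_{j,k}\}$ and check it is contained in $\Lg^{n-4}$. Two standard facts about $\Ln_n(K)$ drive the computation. First, the lower central series is the ``distance from the diagonal'' filtration, $\Lg^i=\s\{E_{j,k}\mid k-j\ge i+1\}$; in particular $[\Lg,\Lg]=\Lg^1=\s\{E_{a,b}\mid b-a\ge 2\}$ and the target subspace is $\Lg^{n-4}=\s\{E_{j,k}\mid k-j\ge n-3\}$. Second, all brackets are governed by the single rule $[E_{j,k},E_{a,b}]=\delta_{ka}E_{j,b}-\delta_{bj}E_{a,k}$. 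For $n=4,5$ one has $\Lg^{n-4}=\Lg$ respectively $\Lg^{n-4}=[\Lg,\Lg]$, so these cases will be covered automatically by the general computation.

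Writing $z=\sum_{j<k}c_{jk}E_{j,k}$ and expanding against an arbitrary generator $E_{a,b}$ of $[\Lg,\Lg]$ (so $b\ge a+2$), the bracket rule gives
\[
[z,E_{a,b}]=\sum_{j<a}c_{ja}E_{j,b}-\sum_{k>b}c_{bk}E_{a,k}.
\]
The crucial structural point is that the basis vectors occurring in the two sums are pairwise distinct: those in the first family lie in column $b$ with row index $<a$, those in the second lie in row $a$ with column index $>b$, and no basis element can be of both shapes. Hence $[z,E_{a,b}]=0$ forces each coefficient to vanish on its own, i.e. $c_{ja}=0$ for all $j<a$ and $c_{bk}=0$ for all $k>b$.

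It then remains to let $(a,b)$ range over all admissible pairs and collect the conditions. Allowing the column index $a$ to run over $2\le a\le n-2$ (the values for which some $b\ge a+2$ exists) annihilates every $c_{jk}$ with $k\le n-2$, while allowing the row index $b$ to run over $3\le b\le n$ annihilates every $c_{jk}$ with $j\ge 3$. What can survive therefore has $j\in\{1,2\}$ and $k\in\{n-1,n\}$, so $z$ is a linear combination of $E_{1,n-1},E_{1,n},E_{2,n-1},E_{2,n}$; each of these satisfies $k-j\ge n-3$ and hence lies in $\Lg^{n-4}$, giving $z\in\Lg^{n-4}$ as required.

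I do not expect a genuine obstacle here; the argument is essentially bookkeeping. The one step that really carries the proof, and where care is needed, is the disjointness of the two families of basis vectors in the displayed expansion — it is precisely what lets me pass from the vanishing of the whole bracket to the vanishing of the individual coefficients. The only other thing to watch is the boundary ranges of $a$ and $b$, so as to record the correct (non-vacuous) set of forced relations.
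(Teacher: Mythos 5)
Your proof is correct. The paper states Lemma \ref{4.1} as ``well known'' and gives no proof, so there is nothing to compare against; your direct computation of the centralizer of $[\Lg,\Lg]$ in the basis $\{E_{j,k}\}$ is sound, the disjointness of the two families $E_{j,b}$ ($j<a$) and $E_{a,k}$ ($k>b$) is correctly identified as the step that lets you kill coefficients individually, the boundary ranges $2\le a\le n-2$ and $3\le b\le n$ are handled properly, and the surviving span $\{E_{1,n-1},E_{1,n},E_{2,n-1},E_{2,n}\}$ does lie in $\Lg^{n-4}=\s\{E_{j,k}\mid k-j\ge n-3\}$ (in fact you have shown the slightly stronger statement that the centralizer of $[\Lg,\Lg]$ is contained in this four-dimensional subspace).
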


Here is the main result of this section.

\begin{thm}\label{4.2}
Let $\Lg$ be the Lie algebra $\Ln_n(K)$ with $n\ge 5$. Then every CPA-structure $(V,\cdot)$ on $\Lg$ is associative 
and the algebra $(V,\cdot)$ is Poisson-admissible. Moreover we have $\Lg\cdot \Lg\subseteq \Lg^{n-3}$.
\end{thm}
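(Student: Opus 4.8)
The plan is to mimic the structure of the filiform case (Theorem~\ref{3.4}) but to exploit the finer grading available on $\Ln_n(K)$. The first thing I would do is fix the $\Z$-grading of $\Lg=\Ln_n(K)$ by $\Lg=\bigoplus_{d\ge 1}\Lg_d$, where $\Lg_d=\s\{E_{j,k}\mid k-j=d\}$ and $\Lg_d=\Lg^{d-1}$ matches the lower central series. Since all left multiplications $L(x)$ are derivations of $\Lg$ (by \eqref{post3}), and since $\Ln_n(K)$ is a nilpotent stem Lie algebra, Theorem~$3.6$ of \cite{BU57} gives that every $L(x)$ is nilpotent; hence $L(x)$ strictly raises the grading filtration, i.e.\ $L(x)(\Lg^i)\subseteq\Lg^{i+1}$. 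This is the exact analogue of Lemma~\ref{3.2} and it immediately yields $\Lg\cdot\Lg\subseteq[\Lg,\Lg]=\Lg^1$ together with $\Lg\cdot\Lg^i\subseteq\Lg^{i+1}$.

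Next I would use the explicit derivation algebra of $\Ln_n(K)$. The derivations of $\Ln_n(K)$ are well understood: modulo inner derivations and the two obvious diagonal gradings, $\Der(\Ln_n(K))$ consists of the inner part $\ad\Lg$, a torus acting by weights, and the extremal/central adjustments at the corner. The key consequence I want to extract is a strong constraint on $L(E_{j,k})$ for $E_{j,k}$ lying deep in the lower central series: by Lemma~\ref{4.1}, any $z$ with $[z,[\Lg,\Lg]]=0$ already lies in $\Lg^{n-4}$, so generators of the derived algebra cannot be annihilated too cheaply, and this forces the ``subdiagonal'' entries of each $L(x)$ to vanish except possibly at the corner position landing in the one-dimensional center $\Lg^{n-2}=Z(\Lg)$. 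Concretely I would run the same bootstrap as in Case~1a of Theorem~\ref{3.4}: apply \eqref{com6} to $x\cdot[\Lg,\Lg]$ with the generator relations $[E_{1,2},E_{j,k}]$, match leading grading components, and conclude inductively that $L(E_{j,k})$ vanishes on everything whenever $E_{j,k}\in[\Lg,\Lg]$. The upshot is the claim $\Lg\cdot[\Lg,\Lg]=0$, which by Lemma~\ref{2.5} is exactly associativity and Poisson-admissibility.

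For the sharper statement $\Lg\cdot\Lg\subseteq\Lg^{n-3}$, once I know only the ``top'' products $E_{1,2}\cdot E_{1,2}$, $E_{1,2}\cdot E_{1,3}$-type terms (i.e.\ the products of the two generators of $\Lg/[\Lg,\Lg]$) can be nonzero, I would feed these back into \eqref{com5} and \eqref{com6} to bound where they land. Using that $L(E_{1,2})$ and $L(E_{1,3})$ are derivations of the prescribed weighted form, and that the associated relations $E_{1,2}\cdot E_{j,k}=L(E_{1,2})(E_{j,k})$ are forced by the derivation structure, the leftover freedom in $E_{1,2}\cdot E_{1,2}$ etc.\ is confined to the corner components, i.e.\ to $\Lg^{n-3}$ and deeper. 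This mirrors how in Proposition~\ref{3.12} the surviving parameters only appear in the top two graded pieces.

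\textbf{The main obstacle} will be the combinatorial bookkeeping in the inner bootstrap: unlike the filiform case, $\Ln_n(K)$ has a two-dimensional abelianization and many generators in low degree, so the induction ``$L(x)$ annihilates $[\Lg,\Lg]$'' must be organized carefully over the matrix positions $(j,k)$ rather than over a single index, and one has to verify the grading-shift estimates simultaneously for all the commutator relations $[E_{j,k},E_{k,\ell}]=E_{j,\ell}$. The crucial input that makes this tractable is Lemma~\ref{4.1}, which prevents premature vanishing and pins the only possible nonzero contributions to the one-dimensional center; I would expect most of the real work to be confirming, via the explicit basis of $\Der(\Ln_n(K))$, that no ``off-grading'' derivation components can sneak into the $L(E_{j,k})$ for $E_{j,k}$ in the derived algebra.
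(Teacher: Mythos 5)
Your proposal assembles the right ingredients (nilpotency of all $L(x)$ via Theorem $3.6$ of \cite{BU57}, the structure of $\Der(\Ln_n(K))$ from \cite{OWY}, and Lemma~\ref{4.1} for the final sharpening to $\Lg\cdot\Lg\subseteq\Lg^{n-3}$), but the core of the argument is missing. First, a local error: the inference ``$L(x)$ is nilpotent, hence $L(x)(\Lg^i)\subseteq\Lg^{i+1}$'' is a non sequitur here. In the filiform case this works because every quotient $I_j/I_{j+1}$ is one-dimensional, so a nilpotent operator preserving the flag is strictly triangular; for $\Ln_n(K)$ the graded pieces $\Lg^i/\Lg^{i+1}$ have dimension $n-1-i>1$, and a nilpotent derivation preserving the lower central series can act nontrivially on these quotients. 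The containment you want does hold, but only because of the decomposition $L(x)=\ad(z)+\psi$ with $\psi(\Lg)\subseteq\Lg^{n-3}$, $\psi([\Lg,\Lg])=0$ from \cite{OWY} --- which you invoke only later and for a different purpose.

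More seriously, the step that actually proves $\Lg\cdot[\Lg,\Lg]=0$ is left as a ``bootstrap \ldots organized over the matrix positions,'' and you yourself flag that ``most of the real work'' remains. That work is not a routine transfer of Case~1a of Theorem~\ref{3.4}: the filiform induction leans on one-dimensional graded pieces and a single generator $e_1$, neither of which is available for $\Ln_n(K)$. The paper's mechanism is different and is the essential missing idea: it inducts on $n$ by forming the two ideals $\La=I+\Lg^{n-3}$ and $\Lb=J+\Lg^{n-3}$ built from the first row and the last column, noting $\Lg/\La\cong\Lg/\Lb\cong\Ln_{n-1}(K)/Z(\Ln_{n-1}(K))$, showing each is stable under all $L(x)$ via the $\ad(z)+\psi$ decomposition, pushing the CPA-structure to both quotients, and intersecting the two resulting containments to get $\Lg\cdot\Lg\subseteq\Lg^{n-4}$ and $\Lg\cdot[\Lg,\Lg]\subseteq\Lg^{n-3}$; a short computation with \eqref{com6} and \eqref{com5} then kills $\Lg\cdot[\Lg,\Lg]$ entirely, and Lemma~\ref{4.1} upgrades $\Lg^{n-4}$ to $\Lg^{n-3}$ exactly as you anticipate. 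Without this double-quotient induction (or a worked-out substitute), your outline does not yet constitute a proof.
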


\begin{proof}
Since $\Lg$ is a nilpotent stem Lie algebra, all left multiplications are nilpotent by 
Theorem $3.6$ in \cite{BU57}. Hence they are nilpotent derivations of $\Lg$. By Theorem $3.2$ in \cite{OWY}
there exists for every nilpotent derivation $D \in \Der(\Lg)$ an $u\in \Lg$ and a $\psi\in \Der(\Lg)$ such that
$D=\ad (u)+\psi$ and $\psi(\Lg)\subseteq \Lg^{n-3}$, $\psi([\Lg,\Lg])=0$. Hence for every $x\in \Lg$ there is a $z\in \Lg$
and a $\psi\in \Der(\Lg)$, depending on $x$, such that $L(x)=\ad(z)+\psi$ with these properties. We show by induction
over $n$ that $\Lg\cdot [\Lg,\Lg]=0$. For $n=5,6$ this can be verified by a direct computation. So we may assume
$n\ge 7$ for the induction step. Define Lie ideals $I$ and $J$ in $\Lg$ as follows
\begin{align*}
I  & = {\rm span} \{ E_{1,i} \mid 2\le i\le n\},\\
J  & = {\rm span} \{ E_{i,n} \mid 1\le i\le n-1\}.
\end{align*}
Let $\La=I+\Lg^{n-3}$,  $\Lb=J+\Lg^{n-3}$ and note that $\Lg^{n-3}={\rm span}\{E_{1,n-1},E_{2,n},E_{1,n} \}$.
Then we have $\Lg/\La\cong \Lh/Z(\Lh)$ with $\Lh=\Ln_{n-1}(K)$. By induction hypothesis every CPA-structure on $\Lh$ satisfies
$\Lh\cdot [\Lh,\Lh]=0$ and $\Lh\cdot \Lh\subseteq \Lh^{n-4}$. Let $x\in \Lg$. Then we have
\begin{align*}
x\cdot \La & = L(x)(\La) \\
           & = \ad(z)(\La)+ \psi(\La) \\
           & \subseteq \La + \Lg^{n-3},
\end{align*}
so that $\Lg\cdot \La\subseteq \La$ because of $\Lg^{n-3}\subseteq \La$. Now let $(V,\cdot)$ be a CPA-structure on $\Lg$.
It induces a CPA-structure on the quotient $\Lg/\La\cong \Lh/Z(\Lh)$, so that we have 
$(\Lg/\La)\cdot (\Lg/\La)\subseteq Z(\Lg/\La)$ and $\Lg/\La\cdot [\Lg/\La,\Lg/\La]=0$. This implies that
$\Lg\cdot \Lg \subseteq Z(\Lg/\La)+\La$ and $\Lg\cdot [\Lg,\Lg]\subseteq \La$. On the other hand we also have $\Lg/\Lb\cong \Lh/Z(\Lh)$,
so that $\Lg\cdot \Lg \subseteq Z(\Lg/\Lb)+\Lb$ and $\Lg\cdot [\Lg,\Lg]\subseteq  \Lb$. Together this yields
\begin{align*}
\Lg\cdot \Lg & \subseteq (Z(\Lg/\La)+\La) \cap (Z(\Lg/\Lb)+\Lb) = \Lg^{n-4},\\
\Lg\cdot [\Lg,\Lg] & \subseteq \La\cap \Lb = \Lg^{n-3}.
\end{align*}
Using this we obtain by \eqref{com6} that
\begin{align*}
\Lg\cdot \Lg^2 & \subseteq [\Lg\cdot \Lg,[\Lg,\Lg]]+[\Lg,\Lg\cdot [\Lg,\Lg]] \\
               & \subseteq [\Lg,[\Lg,\Lg\cdot \Lg]]+[\Lg,[\Lg\cdot \Lg,\Lg]]+ [\Lg,\Lg^{n-3}] \\
               & \subseteq \Lg^{n-2}.
\end{align*}
This implies similarly that
\begin{align*}
\Lg\cdot \Lg^3 & \subseteq [\Lg\cdot \Lg,\Lg^2]+[\Lg,\Lg\cdot \Lg^2]  \\
               & \subseteq [\Lg,[[\Lg,\Lg],\Lg\cdot \Lg]+[[\Lg,\Lg],[\Lg\cdot \Lg,\Lg]]+[\Lg,\Lg^{n-2}] \\
               & \subseteq \Lg^{n-1}=0.
\end{align*}
Since $n\ge 7$ we have $\Lg^{n-4}\supseteq \Lg^3$ and hence $\Lg\cdot \Lg^{n-4}=0$. Then, by \eqref{com5},
\[
\Lg\cdot [\Lg,\Lg] \subseteq \Lg\cdot (\Lg\cdot \Lg)\subseteq \Lg\cdot \Lg^{n-4}=0. 
\] 
It remains to show that $\Lg\cdot \Lg\subseteq \Lg^{n-3}$ and not only $\Lg\cdot \Lg\subseteq \Lg^{n-4}$. Suppose
that $L(x)(\Lg)$ is not contained in $\Lg^{n-3}$ for some $x\in \Lg$. Then there is a $z\in \Lg$ and a $\psi\in \Der(\Lg)$
such that $[z,\Lg]+\psi (\Lg)\nsubseteq \Lg^{n-3}$. Then $[z,\Lg]\nsubseteq \Lg^{n-3}$ because of $\psi(\Lg)\subseteq \Lg^{n-3}$,
and hence $z\not\in \Lg^{n-4}$. By Lemma $\ref{4.1}$ it follows that $[z,[\Lg,\Lg]]\neq 0$. Since $\psi([\Lg,\Lg])=0$ this 
implies $L(z)([\Lg,\Lg])\neq 0$, which contradicts $\Lg\cdot [\Lg,\Lg]=0$.
\end{proof}

Note that the result does not hold for $n=4$. There are CPA-structures on $\Ln_4(K)$, which are not associative.

\begin{rem}
One can extend this result to the solvable Lie algebra $\Lt_n(K)$ of all $n\times n$ upper-triangular matrices
over $K$, see Proposition $5.34$ in \cite{END}. Every CPA-structure $(V,\cdot)$ on $\Lt_n(K)$ for $n\ge 3$ is associative and
satisfies $\Lg\cdot \Lg\subseteq Z(\Lg)+Z([\Lg,\Lg])$.
\end{rem}

\section*{Acknowledgments}
Dietrich Burde is supported by the Austrian Science Foun\-da\-tion FWF, grant P28079 and grant I3248 and Christof Ender is 
supported by the Austrian Science Foun\-da\-tion FWF, grant P28079.

\end{document}